\def\subjclass#1{{\renewcommand{\thefootnote}{}%
\footnote{\emph{Mathematics Subject Classification (2020):} #1}}}
\def\keywords#1{{\renewcommand{\thefootnote}{}%
\footnote{\emph{Keywords:} #1}}}
\def\ackn#1{{\renewcommand{\thefootnote}{}%
\footnote{#1}}}
\newtheorem{thm}{Theorem}[section]
\newtheorem{lem}[thm]{Lemma}
\newtheorem{prop}[thm]{Proposition}
\newtheorem{question}[thm]{Question}
\theoremstyle{definition}
\newtheorem{defin}[thm]{Definition}
\numberwithin{equation}{section}
\newcommand{\Z}{\mathbb{Z}}
\renewcommand{\H}{\mathbb{H}}
\newcommand{\C}{\mathbb{C}}
\newcommand{\R}{\mathbb{R}}
\newcommand{\A}{\mathcal{A}}
\newcommand{\cS}{\mathcal{S}}
\DeclareMathOperator{\Id}{Id}
\DeclareMathOperator{\GL}{GL}
\DeclareMathOperator{\NS}{NS}
\DeclareMathOperator{\Par}{Par}
\newcommand{\alaligne}{~\vspace*{\topsep}\nobreak\@afterheading}
\newcommand{\cL}{\mathcal{L}}
\begin{document}

\title{Connected components of the general linear group of a real hereditarily indecomposable Banach space}

\author{N. de Rancourt}

\date{}

\maketitle

\subjclass{Primary: 47B01; Secondary: 46B03, 54D05.}

\ackn{The author was supported by FWF Grant P29999.}

\keywords{Hereditarily indecomposable Banach spaces, connectedness of general linear groups, complex structures on real Banach spaces.}


\begin{abstract}

We give a complete description of the structure of the connected components of the general linear group of a real hereditarily indecomposable Banach space, depending on the existence of complex structures on the space itself and on its hyperplanes. A side result is the fact that complex structures cannot exist simultaneously on such a space and on its hyperplanes.

\end{abstract}

\section{Introduction}

In this paper, unless otherwise specified, when speaking about a \textit{Banach space} (or simply a \textit{space}), we shall mean an infinite-dimensional Banach space, and by \textit{subspace} of a Banach space, we shall always mean infinite-dimensional, closed subspace.
By \textit{operator}, we shall always mean bounded linear operator. The algebra of operators on a normed space $X$ will be denoted by $\cL(X)$, and the general linear group of $X$, that is, the group of invertible elements of $\cL(X)$, will be denoted by $\GL(X)$. The algebra $\cL(X)$, and all of its subsets, will always be endowed with the topology induced by the operator norm.

\smallskip

Connected components of $\GL(X)$ are well-known when $X$ is a finite-dimensional normed space: this group is connected in the complex case, and has two connected components determined by the sign of the determinant in the real case. In the infinite-dimensional case, the situation is more complex. It follows from standard operator theory that $\GL(\ell_2)$ is connected in the complex case, and connectedness in the real case was first proved by Putnam and Wintner \cite{PutnamWintner}. Then, connectedness of the general linear group has been proved, both in the real and in the complex case, for the space $c_0$ independently by Arlt \cite{Arlt} and Neubauer \cite{Neubauer}, for the spaces $\ell_p$, $1\leqslant p < \infty$, by Neubauer \cite{Neubauer}, for $C([0, 1])$, $L_1([0, 1])$, $L_\infty([0, 1])$ and $\ell_\infty$ by Edelstein, Mityagin and Semenov \cite{EdelsteinMitjaginSemenov}, and for the spaces $L_p([0, 1])$, $1 < p < \infty$, by McCarthy and Mityagin (the proof first appeared in the survey \cite{MityaginSurvey} by Mityagin). On the other hand, it was proved by Douady \cite{Douady}, that $\GL(X \times Y)$, for $X$ and $Y$ being two distinct spaces among the $\ell_p$'s, $1 \leqslant p < \infty$, and $c_0$, is not connected, both in the real and in the complex case. For more details and results on this topic, and more generally on the homotopy structure of general linear groups of infinite-dimensional Banach spaces, we refer to Mityagin's survey \cite{MityaginSurvey}.

\smallskip

A common point between all of the latter examples is that their spaces of operators are very rich. In this paper, we study the case of some spaces where on the contrary, the space of operators is very poor: hereditarily indecomposable Banach spaces.

\begin{defin}
A Banach space is said to be \textit{hereditarily indecomposable (HI)} if it contains no topological direct sum of two subspaces.
\end{defin}

HI spaces were introduced by Gowers and Maurey in \cite{GowersMaureyHI}, where they built the first example of such a space (this space, existing in a real and in a complex version, will be called \textit{Gowers--Maurey's space} in this paper). It turns out that these spaces are very rigid. For example, in the complex case, the following result was proved by Gowers and Maurey in \cite{GowersMaureyHI}.

\begin{thm}[Gowers--Maurey]\label{thmGowersMaurey}
Every operator on a complex HI space $X$ has the form $\lambda \Id_X + S$, where $\lambda \in \C$ and $S$ is a strictly singular operator.
\end{thm}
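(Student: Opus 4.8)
The goal is, given an arbitrary $T\in\cL(X)$, to produce a scalar $\lambda\in\C$ for which $T-\lambda\Id_X$ is strictly singular --- equivalently, is bounded below on no (infinite-dimensional, closed) subspace of $X$. The natural strategy is to pass to the quotient Banach algebra $A:=\cL(X)/\cS(X)$, where $\cS(X)$ is the closed two-sided ideal of strictly singular operators, and to prove that $A$ is isomorphic to $\C$; since $[T]=0$ in $A$ precisely when $T$ is strictly singular, the statement to be proved is exactly that $[T]$ is a scalar multiple of $[\Id_X]$ for every $T\in\cL(X)$. Now $A\cong\C$ as soon as $A$ is a division algebra, i.e.\ as soon as every $T$ which is \emph{not} strictly singular has $[T]$ invertible in $A$; by Atkinson's theorem this is equivalent to $T$ being a Fredholm operator. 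So everything reduces to the structural lemma: \emph{on an HI space, any operator that is not strictly singular is Fredholm}.

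To prove this I would use the standard quantitative form of the HI property: for any two subspaces $Y,Z$ and any $\varepsilon>0$ there are unit vectors $y\in Y$, $z\in Z$ with $\|y-z\|<\varepsilon$. Fix $T$ bounded below on a subspace $Y$, say $\|Tx\|\geqslant c\|x\|$ for $x\in Y$ and some $c>0$. First, $\ker T$ is finite-dimensional: otherwise, taking unit vectors $y_n\in Y$, $z_n\in\ker T$ with $\|y_n-z_n\|\to 0$ would give $\|Ty_n\|=\|T(y_n-z_n)\|\to 0$, contradicting $\|Ty_n\|\geqslant c$. Writing $X=\ker T\oplus W$, one checks next that $T|_W$ is bounded below: if not, there are unit $w_n\in W$ with $\|Tw_n\|\to 0$; these have no norm-convergent subsequence (a norm limit of a subsequence would be a unit vector of $\ker T\cap W=\{0\}$), so one may pass to a basic subsequence and, taking it sufficiently sparse, arrange that $T$ has arbitrarily small norm on the infinite-dimensional subspace it spans --- which, confronted with $Y$ via the small-angle property, contradicts the lower bound on $Y$. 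Hence $\ima T=T(W)$ is closed. The remaining point --- that $\ima T$ has \emph{finite codimension} --- is the genuine obstacle: it really uses the full HI hypothesis (morally, an HI space contains no infinite-codimensional subspace isomorphic to itself), and turning the mere non-existence of direct-sum decompositions into a usable quantitative contradiction is the technical heart of the proof.

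One can, however, sidestep the codimension point. Keeping only the part of the lemma established above --- that a non-strictly-singular $S\in\cL(X)$ has $\ker S$ finite-dimensional and $\ima S$ closed --- suppose for contradiction that $T-\lambda\Id_X$ is not strictly singular for \emph{every} $\lambda\in\C$. Then $\ker(T-\lambda\Id_X)$ is finite-dimensional and $\ima(T-\lambda\Id_X)$ closed for all $\lambda$, so the index $\dim\ker(T-\lambda\Id_X)-\operatorname{codim}\,\ima(T-\lambda\Id_X)\in\Z\cup\{-\infty\}$ is defined and locally constant on all of $\C$; being $0$ for $|\lambda|>\|T\|$, it is $0$ everywhere, so $T-\lambda\Id_X$ is in fact Fredholm of index $0$ for every $\lambda$. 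The analytic Fredholm alternative then forces $\sigma(T)$ to be discrete, hence --- being compact and nonempty --- finite, with each $\lambda_0\in\sigma(T)$ an isolated eigenvalue whose Riesz spectral projection has finite rank; otherwise $T-\lambda_0\Id_X$ would restrict to a quasi-nilpotent Fredholm operator on an infinite-dimensional space, which would have empty essential spectrum --- impossible. But then $\Id_X$ is a finite sum of finite-rank projections, so $X$ is finite-dimensional, a contradiction. This variant makes the role of the \emph{complex} scalar field (analytic Fredholm theory, nonemptiness of the essential spectrum) quite transparent.

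Granting the lemma, the rest is formal. Since $\mathcal K(X)\subseteq\cS(X)$ and $T$ is Fredholm, Atkinson's theorem yields $U\in\cL(X)$ with $UT-\Id_X$ and $TU-\Id_X$ compact, hence strictly singular, so $[T]$ is invertible in $A$; together with the fact that $[T]=0$ exactly when $T$ is strictly singular, this shows every nonzero element of $A$ is invertible. As $A$ is a nonzero unital complex Banach algebra ($\Id_X$ is bounded below, hence not strictly singular, so $A\neq\{0\}$), the Gelfand--Mazur theorem gives $A\cong\C$. Hence for every $T\in\cL(X)$ there is $\lambda\in\C$ with $[T]=\lambda[\Id_X]$, that is, $T=\lambda\Id_X+S$ with $S:=T-\lambda\Id_X$ strictly singular, which is the assertion. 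The principal obstacle is, as flagged, the finite-codimensionality of the range in the lemma; the finite-dimensional kernel and the closed range are comparatively routine consequences of the small-angle characterization of HI spaces plus a basic-sequence extraction.
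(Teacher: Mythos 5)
The paper does not prove this theorem: it is quoted from Gowers--Maurey \cite{GowersMaureyHI}, so there is no internal proof to compare against. Your reconstruction is essentially the classical argument, and the part that actually carries the proof is your ``sidestep'' paragraph: a non-strictly-singular operator on an HI space has finite-dimensional kernel and closed range (via the small-angle characterization of HI), so if $T-\lambda\Id_X$ were never strictly singular it would be upper semi-Fredholm for every $\lambda$, of index $0$ by local constancy of the index and connectedness of $\C$, hence Fredholm for every $\lambda$ --- which is impossible on an infinite-dimensional complex space. That impossibility can be seen much faster than via the analytic Fredholm alternative and Riesz projections: the essential spectrum of $T$, being the spectrum of the image of $T$ in the Calkin algebra (a nonzero unital complex Banach algebra), is nonempty, so some $T-\lambda_0\Id_X$ fails to be Fredholm. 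Your final Gelfand--Mazur paragraph is then logically dangling, since it ``grants'' the full lemma (non-strictly-singular $\Rightarrow$ Fredholm) whose finite-codimension half you explicitly did not prove; but it is also unnecessary, because the sidestep already produces the desired $\lambda$, and the Fredholm lemma follows \emph{a posteriori} (a non-strictly-singular $T=\lambda\Id_X+S$ has $\lambda\neq 0$, hence is a strictly singular perturbation of an invertible operator, hence Fredholm of index $0$).

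One genuine, though repairable, inaccuracy: in the closed-range step you extract ``a basic subsequence'' from a bounded sequence with no norm-convergent subsequence. That selection principle is false in general (the sequence $(e_1+e_n)_{n\geqslant 2}$ in $c_0$ is bounded, has no convergent subsequence, and no basic subsequence). The correct route is Mazur's inductive construction: since $T$ is bounded below on no finite-codimensional subspace of $W$ (otherwise $\ima T$ would already be closed), one can choose the unit vectors $w_n$ with $\|Tw_n\|<\varepsilon_n$ successively inside finite-codimensional subspaces determined by a net of functionals norming the span of the previous vectors, which forces $(w_n)$ to be basic with constant at most $2$; the closed span then works as your $Z$. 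With that local repair, the proof is complete and faithful in substance to the original Gowers--Maurey argument.
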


Recall that an operator on $X$ is said to be \textit{strictly singular} if it induces no isomorphism between two subspaces of $X$. The set of strictly singular operators on $X$, that we will denote by $\cS(X)$ in this paper, is a two-sided, closed ideal of $\cL(X)$. An analogue of Theorem \ref{thmGowersMaurey} for real HI spaces, due to Ferenczi \cite{FerencziHI}, will be presented in Section \ref{secComplexStructures} (Theorem \ref{FerencziRCH}).

\smallskip

A consequence of the smallness of the space of operators on an HI space is that similar methods as those used in the finite-dimensional case apply quite well to the study of connected components of $\GL(X)$. This study was started by the author in \cite{RancourtHI}, were the complex case was settled.

\begin{thm}[de Rancourt]

If $X$ is a complex HI space, then $\GL(X)$ is connected.

\end{thm}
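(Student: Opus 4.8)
The plan is to use the Gowers--Maurey structure theorem (Theorem~\ref{thmGowersMaurey}) to collapse the whole problem to an elementary connectedness argument in the complex plane. Since $\GL(X)$ is a topological group, it is enough to join an arbitrary $T \in \GL(X)$ to $\Id_X$ by a continuous path inside $\GL(X)$. Write $T = \lambda \Id_X + S$ with $\lambda \in \C$ and $S \in \cS(X)$. First note $\lambda \neq 0$: otherwise $T = S$ would be an invertible strictly singular operator, which is absurd, an invertible operator being an isomorphism of $X$ onto itself. So the natural object to examine is the affine pencil $z \mapsto z\Id_X + S$ together with its regular set $\Omega := \{ z \in \C : z\Id_X + S \in \GL(X)\} = \C \setminus (-\sigma(S))$; observe that $\lambda \in \Omega$ precisely because $\lambda\Id_X + S = T$ is invertible.

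The key input is the spectral theory of strictly singular operators: $S$ is a Riesz operator, i.e.\ $z\Id_X - S$ is Fredholm of index $0$ for every $z \neq 0$, so $\sigma(S)$ is at most countable, a sequence of eigenvalues tending to $0$ together with possibly $0$ itself. Hence $-\sigma(S)$ is a countable compact subset of $\C \cong \R^2$, so $\Omega$ is open and \emph{path-connected} (the complement of any countable subset of the plane is path-connected: given two points, there is a polygonal path between them missing the countably many removed points). Moreover $\sigma(S)$ lies in the disc of radius $\|S\|$, so every $z_0$ with $|z_0| > \|S\|$ belongs to $\Omega$. Fix such a $z_0$ and choose a path $\gamma$ in $\Omega$ from $\lambda$ to $z_0$; then $t \mapsto \gamma(t)\Id_X + S$ is a path in $\GL(X)$ from $T$ to $z_0\Id_X + S$.

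It remains to connect $z_0\Id_X + S$ to $\Id_X$. Since $\|S\|/|z_0| < 1$, the Neumann series shows that $z_0\Id_X + tS = z_0\bigl(\Id_X + (t/z_0)S\bigr)$ is invertible for all $t \in [0,1]$, giving a path in $\GL(X)$ from $z_0\Id_X + S$ to $z_0\Id_X$; and a path from $z_0$ to $1$ in $\C\setminus\{0\}$ yields a path $t \mapsto \gamma'(t)\Id_X$ in $\GL(X)$ from $z_0\Id_X$ to $\Id_X$. Concatenating the three paths joins $T$ to $\Id_X$ inside $\GL(X)$, and since $T$ was arbitrary, $\GL(X)$ is path-connected, hence connected.

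The only place where hereditary indecomposability is used is through Theorem~\ref{thmGowersMaurey}; everything afterwards is soft. Accordingly, the main (essentially the only) obstacle is to have the Riesz-operator property of strictly singular operators available, which is what makes $\Omega$ have connected spectral complement and lets the scalar deformation work. If one wishes to sidestep the ``countable complement'' fact, an equivalent route is to set $S' = \lambda^{-1}S$, note $-1 \notin \sigma(S')$, let $P$ be the finite-rank Riesz projection of $S'$ attached to the finitely many eigenvalues in $(-\infty,-1)$, and deform separately on $\ker P$ (where $\Id_X + tS'$ stays invertible for $t\in[0,1]$) and on the finite-dimensional $\ima P$ (where one uses connectedness of the general linear group in finite complex dimension) --- a splitting that is legitimate even though $X$ is HI, since $\ima P$ is finite-dimensional.
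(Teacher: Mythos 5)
Your proof is correct and follows essentially the same route as the source this theorem is quoted from (the present paper cites it without reproving it): write $T=\lambda \Id_X+S$ with $\lambda\neq 0$ and $S$ strictly singular via Gowers--Maurey, use the Riesz/countable-spectrum property to move the scalar through the path-connected set $\C\setminus(-\sigma(S))$ to a point of large modulus, then contract $S$ to $0$ and the scalar to $1$. No gaps.
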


In the same paper, the author proved a partial result in the case of real HI spaces, that we present below. This result is stated in terms of \textit{homotopy} between operators. If $\mathcal{A} \subseteq \cL(X)$ and $S, T \in \A$, we say that $S$ and $T$ are \textit{homotopic} in $\mathcal{A}$ whenever there exists a continuous mapping $[0, 1] \to \mathcal{A}$, $t \mapsto T_t$, with $T_0 = S$ and $T_1 = T$. Since $\GL(X)$ is open in $\cL(X)$, it follows that two operators in $\GL(X)$ are homotopic in $\GL(X)$ if and only if they are in the same connected component of $\GL(X)$.

\smallskip

Also recall that an operator $R \in \cL(X)$ is a \textit{reflection} of $X$ if there exists $H \subseteq X$ a hyperplane and $x_0 \in X \setminus \{0\}$ such that $\forall x \in H \; R(x) = x$ and $R(x_0) = - x_0$. Call an \textit{antireflection} of $X$ an operator of the form $-R$, where $R$ is a reflection of $X$. It is easy to see that any two reflections of the same space $X$ are homotopic in $\GL(X)$, and that the same holds for antireflections. The partial result about real HI spaces proved in \cite{RancourtHI} is the following:

\begin{thm}[de Rancourt]\label{AtMostFour}
Let $X$ be a real HI space and let $T \in \GL(X)$. Then $T$ is homotopic, in $\GL(X)$, either to $\Id_X$, or to $-\Id_X$, or to all reflections, or to all antireflections.
\end{thm}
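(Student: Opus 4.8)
I want to show that the group $\GL(X)$ of a real HI space has at most four ``types'' of elements up to homotopy, represented by $\Id_X$, $-\Id_X$, a reflection $R$, and an antireflection $-R$. By Theorem~\ref{thmGowersMaurey}'s real analogue (Theorem~\ref{FerencziRCH}, to be invoked), every $T \in \GL(X)$ has the form $\lambda \Id_X + S$ for some scalar $\lambda$ and some strictly singular $S$ --- or, in the real case, the relevant statement will say that either $T$ has this form, or $X$ carries a complex structure and $T$ lies in a suitable ``complex'' part of $\cL(X)$; in all cases $T$ modulo $\cS(X)$ lands in a small algebra. The strategy is therefore to reduce a homotopy question in $\GL(X)$ to a homotopy question in the Calkin-type quotient $\cL(X)/\cS(X)$, which is (close to) $\R$ or $\C$, and then lift.

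**Key steps.** First I would record that $\cS(X)$ is an ideal and that the quotient map $\pi: \cL(X) \to \cL(X)/\cS(X)$ is continuous and open, so it carries homotopies to homotopies; moreover $\pi^{-1}$ of the invertibles is exactly the Fredholm operators of index $0$ (since on an HI space every operator is either strictly singular or Fredholm of index $0$, strictly singular operators having the finite-dimensional-kernel-and-cokernel failure), and $\GL(X)$ is the index-zero Fredholm operators that are actually bijective. Second, I would analyze the quotient algebra: in the real HI case it is a real division algebra, hence $\R$, $\C$, or $\H$, but the presence of the identity and the structure of HI spaces rules out $\H$ and pins down whether it is $\R$ or $\C$ according to the existence of a complex structure. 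Third, within $\GL(X)$, given $T = \lambda\Id_X + S$ with $\lambda \neq 0$ real, I would first homotope $T$ to $(\mathrm{sgn}\,\lambda)\Id_X + S'$ by scaling $\lambda$ to $\pm 1$ along a path avoiding $0$ (staying Fredholm index $0$), then absorb the strictly singular part: $(\mathrm{sgn}\,\lambda)\Id_X + S'$ is Fredholm of index $0$ and bijective, and I would connect it to $\pm\Id_X$ possibly corrected by a finite-rank reflection, using that strictly singular perturbations of $\pm\Id_X$ that remain invertible can be deformed to finite-rank perturbations and then handled by finite-dimensional linear algebra (where the determinant sign is the only invariant, realized by a reflection). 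The ``complex'' case, where $\pi(T)$ is a nonzero complex number $re^{i\theta}$, is where a path can rotate $\theta$ freely, and I expect this to merge reflections with $-\Id_X$ or collapse components further --- but Theorem~\ref{AtMostFour} only claims \emph{at most} four classes, so I just need the four representatives to exhaust everything, which the above deformation accomplishes regardless.

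**The main obstacle.** The delicate point is the third step: controlling the strictly singular (or finite-rank) correction term while deforming, i.e.\ showing that a strictly singular perturbation $\pm\Id_X + S$ that happens to be invertible can be joined, inside $\GL(X)$, to one of the four model operators, and identifying \emph{which} one. The sign-of-determinant invariant must be extracted from an essentially infinite-dimensional situation; I would do this by finding a finite-codimensional subspace on which $T$ acts as $\pm\Id$ (exploiting strict singularity to get $S$ small off a finite-dimensional piece), splitting $X = F \oplus Y$ with $\dim F < \infty$ and $T$ block-lower-triangular, deforming away the off-diagonal block, and reducing to $\GL(F) \times \{\pm\Id_Y\}$, where $\GL(F)$ contributes at most the two components distinguished by the determinant sign --- the $+$ giving $\pm\Id_X$ and the $-$ giving a reflection or antireflection. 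Keeping every intermediate operator invertible throughout these deformations (equivalently, never letting $0$ enter the spectrum of the scalar part nor letting the finite-dimensional block degenerate) is the technical heart of the argument.
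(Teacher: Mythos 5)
The first thing to note is that the paper does not prove Theorem~\ref{AtMostFour}: it is quoted from the author's earlier work \cite{RancourtHI}, so there is no in-paper proof to compare against. Judged on its own terms, your outline has the right general shape for the case $\pi_X(T) \in \R$ (normalize the scalar part, split off a finite-dimensional piece, read off a determinant sign there), but it contains two genuine gaps. First, your claim that the quotient $\cL(X)/\cS(X)$ cannot be $\H$ is simply false: the paper explicitly discusses Ferenczi's HI space $X(\H)$ of type $\H$, and type $\C$ also occurs. Consequently there really are invertible $T$ with $\pi_X(T) \notin \R$, and for these your deformation does not apply at all -- there is no real $\lambda$ to normalize. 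Your remark that ``the above deformation accomplishes [exhaustion] regardless'' is not an argument; one must actually connect such a $T$ inside $\GL(X)$ to an operator with real scalar part, and the naive straight-line path to $\Id_X$, while it stays Fredholm of index $0$ (the quotient being a division algebra), can leave $\GL(X)$ at the countably many parameters where an eigenvalue crosses $0$. This case needs its own argument and you have not supplied one.

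Second, the mechanism you propose for absorbing the strictly singular part is wrong. Strict singularity gives, for each $\varepsilon>0$, an \emph{infinite-dimensional} subspace on which $\|S\|<\varepsilon$, not a \emph{finite-codimensional} one; in general a strictly singular operator is not small in norm on any finite-codimensional subspace, so you cannot ``find a finite-codimensional subspace on which $T$ acts as $\pm\Id$'' this way. The correct device -- and the one underlying the machinery the paper builds in Section~\ref{secReflections} -- is spectral: since $\pi_X(T)=\lambda>0$ (say) and $T$ is invertible, Lemma~\ref{FiniteMultiplicity} makes $\sigma(T)\cap(-\infty,0)$ a finite set of isolated eigenvalues of finite multiplicity, and the associated Riesz projection yields a $T$-invariant splitting $X = F \oplus Y$ with $\dim F<\infty$, $\sigma(T|_F)\subseteq(-\infty,0)$ and $\sigma(T|_Y)\cap(-\infty,0]=\varnothing$; one then connects $T|_Y$ to $\Id_Y$ via a real holomorphic logarithm and handles $T|_F$ by finite-dimensional linear algebra, the parity of $\dim F$ (that is, $p^{(-\infty,0)}(T)$) deciding between $\Id_X$ and a reflection. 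A related minor slip: when you rescale the scalar part you say the path need only ``stay Fredholm of index $0$,'' but Fredholmness of index $0$ does not imply invertibility; the path must stay in $\GL(X)$, which is why one rescales the whole operator by a positive scalar rather than moving $\lambda$ while freezing $S$.
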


It immediately follows that $\GL(X)$ has at most four connected components. It is also remarked in \cite{RancourtHI} that this bound is attained in the case of the real Gowers--Maurey's space.

\smallskip

The goal of this paper is to give a complete description of the connected components of $\GL(X)$ when $X$ is a real HI space. We will prove, in particular, that $\GL(X)$ necessarily has either $2$ or $4$ connected components, and that both can happen (see Theorem \ref{main}). It turns out that the structure of connected components of $\GL(X)$ is closely related to the existence of complex structures on $X$ and on its hyperplanes, that has been widely studied by Ferenczi \cite{FerencziComplexStructures} and Ferenczi--Galego \cite{EvenSpaces}.

\smallskip

This paper is organized as follows. In Section \ref{secComplexStructures}, we review some properties of real HI spaces and of their complex structures, from papers \cite{FerencziHI, FerencziComplexStructures} by Ferenczi and \cite{EvenSpaces} by Ferenczi--Galego. At the end of the section, we state our main result, Theorem \ref{main}. The next two sections are devoted to its proof. In Section \ref{secReflections}, we prove that whenever $X$ is a real HI space, the identity of $X$ can never be homotopic to a reflection in $\GL(X)$ (Theorem \ref{thmReflexions}). In the course of the proof, some tools playing a similar role as the determinant in finite dimension will be introduced. In Section \ref{secDescription}, we will finish the proof of Theorem \ref{main} and, using the tools introduced in Section \ref{secReflections}, we will give a complete description of the connected components of $\GL(X)$ depending on the existence of complex structures on $X$ and on its hyperplanes (see Theorems \ref{DescriptionR}, \ref{DescriptionEven} and \ref{DescriptionOdd}). It will also be proved that there cannot simultaneously exist complex structures on a real HI space and on its hyperplanes (Theorem \ref{REvenOdd}).

\bigskip\bigskip

\section{Real HI spaces and complex structures}\label{secComplexStructures}

Theorem \ref{thmGowersMaurey} is not always true for real HI spaces. However, the folloing analogue was proved by Ferenczi in \cite{FerencziHI}:

\begin{thm}[Ferenczi]\label{FerencziRCH}

Let $X$ be a real HI space. Then the quotient algebra $\cL(X)/\cS(X)$ is either isomorphic, as a unitary Banach algebra, to $\R$, to $\C$, or to the quaternion algebra $\H$.

\end{thm}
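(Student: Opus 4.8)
The plan is to analyze the quotient algebra $\mathcal{A} := \cL(X)/\cS(X)$ directly, using the structure theory of real HI spaces. First I would recall the key rigidity fact for real HI spaces: every operator on $X$ is a strictly singular perturbation of a ``multiplication operator'' in a suitable sense — more precisely, there is no infinite-dimensional subspace on which two operators disagree in an essentially nontrivial way. The crucial input is that for a real HI space, a bounded operator $T$ is either strictly singular or Fredholm of index $0$; equivalently, $T \in \GL(X) + \cS(X)$ or $T \in \cS(X)$. This dichotomy says precisely that in the quotient $\mathcal{A}$, every element is either $0$ or invertible, i.e. $\mathcal{A}$ is a \emph{division algebra}. (One gets this from the HI property: if $T$ were not strictly singular it would be bounded below on some subspace $Y$; HI then forces $T$ to be Fredholm, and an index/perturbation argument adjusts it to be invertible modulo $\cS(X)$.)

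The second step is to show that $\mathcal{A}$ is a Banach division algebra over $\R$ of \emph{finite} dimension. Finite-dimensionality should follow from the fact that $\cL(X)/\cS(X)$ has few elements for an HI space — concretely, one shows the space of ``scalar-like'' directions is bounded. A clean way: pick any $T \in \cL(X)$; by the dichotomy $T - \lambda \Id_X$ is either invertible mod $\cS$ or strictly singular; the set of $\lambda \in \R$ for which it is \emph{not} invertible mod $\cS$ (the ``essential spectrum'') is nonempty and, since $X$ is real, either a single real point or a conjugate pair — but in any case the subalgebra of $\mathcal{A}$ generated by the image of $T$ is a commutative division algebra over $\R$, hence $\cong \R$ or $\cong \C$ by the classical Gelfand--Mazur theorem (real version). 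Running this over all $T$ and using that commuting copies of $\C$ inside a division algebra interact like a quaternionic structure forces $\dim_\R \mathcal{A} \leqslant 4$.

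The third step is to invoke the \emph{Frobenius theorem}: a finite-dimensional associative division algebra over $\R$ is isomorphic to $\R$, $\C$, or $\H$. Since $\mathcal{A}$ is a unital Banach algebra and the three candidates each carry a unique Banach algebra norm up to equivalence, the isomorphism is automatically one of unital Banach algebras. This yields exactly the trichotomy in the statement.

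The main obstacle, and the part requiring real care, is the second step — establishing that $\mathcal{A}$ is finite-dimensional over $\R$, rather than merely a division algebra. Over $\C$ this is immediate from Gelfand--Mazur, but over $\R$ a Banach division algebra need not be finite-dimensional in general, so one genuinely needs the HI structure (not just the division-algebra property) to bound the dimension; the argument must show that $\cL(X)/\cS(X)$ cannot contain, say, an infinite-dimensional commutative subalgebra, which again comes back to the scarcity of operators on $X$ and a spectral/perturbation analysis as sketched above. Once finite-dimensionality is in hand, the rest is classical algebra.
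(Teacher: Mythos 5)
First, note that the paper does not prove this statement at all: it is quoted as an external result of Ferenczi, with a citation to \cite{FerencziHI}, so there is no in-paper proof to compare against. Judged on its own merits, your outline follows what is indeed the standard (and, as far as I know, Ferenczi's) route: use the fact that a non-strictly-singular operator on an HI space is Fredholm, hence invertible modulo $\cS(X)$, to conclude that $\cL(X)/\cS(X)$ is a unital real Banach division algebra, and then classify such algebras. Step 1 is correct, and it is the only place where the HI hypothesis is genuinely used.

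The problem is your step 2. You assert that ``over $\R$ a Banach division algebra need not be finite-dimensional in general'' and that one therefore needs HI-specific spectral arguments to bound $\dim_\R \mathcal{A}$. This is false: the real Gelfand--Mazur theorem (Mazur, 1938; see e.g.\ Bonsall--Duncan, \emph{Complete Normed Algebras}, Theorem 14.7) states that \emph{every} normed associative unital division algebra over $\R$ is isomorphic to $\R$, $\C$ or $\H$ --- finite-dimensionality is part of the conclusion, not a hypothesis. So your steps 2 and 3 collapse into a single citation, and no further use of the HI structure is needed. As written, your substitute argument for step 2 is not a proof: the subalgebra generated by $\pi_X(T)$ need not be closed under inverses (one must pass to a \emph{maximal} commutative subalgebra to get a commutative division algebra and apply Gelfand--Mazur there), and the sentence ``commuting copies of $\C$ inside a division algebra interact like a quaternionic structure forces $\dim_\R \mathcal{A} \leqslant 4$'' is exactly the nontrivial content of the Frobenius-type classification, which you are assuming rather than proving. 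The conclusion is reachable along your route, but the middle of the argument should be replaced by the correct general theorem (or by an actual proof of it); the claim that the HI property is needed there should be deleted.
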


We will say that the space $X$ has \textit{type $\R$}, \textit{type $\C$}, or \textit{type $\H$} depending on the case; the real Gowers--Maurey's space, for example, is of type $\R$. In the rest of this paper, we will denote by $\pi_X\colon \cL(X) \to \cL(X)/\cS(X)$ the projection, and the subalgebra of $\cL(X)/\cS(X)$ generated by the image of the identity will always be identified to $\R$. (Of course, for spaces of type $\C$ or $\H$, we cannot identify the whole algebra $\cL(X)/\cS(X)$ to $\C$ or $\H$ since this identification is in general not unique.) Observe that if $P \in \cL(X)$ is a projection onto a finite-codimensional subspace $Y$, then the mapping $\cL(Y) \to \cL(X)$, $T \mapsto T \circ P$ induces an isomorphism from $\cL(Y)/\cS(Y)$ onto $\cL(X)/\cS(X)$; in particular, a real HI space and its finite-codimensional subspaces have the same type.

\smallskip

If $(X, \|\cdot\|)$ is a real Banach space, we define a \textit{complex structure} on $X$ as a structure of complex vector space on $X$ extending its structure of real vector space, together with a complex norm $\|\cdot\|_\C$ which, when seen as a real norm, is equivalent to the original norm $\|\cdot\|$. Saying that a real Banach space $X$ admits a complex structure is hence equivalent to say that $X$ is isomorphic, as a real Banach space, to some complex Banach space; thus admitting a complex structure is an isomorphic property of real Banach spaces. We will identify two complex structures on $X$ when the associated complex norms are equivalent. Hence, a complex structure on $X$ is entierely determined by the action of the multiplication by $i$ on the space $X$; this multiplication should be an operator $I \in \cL(X)$ such $I^2 = -\Id_X$. Conversely, given such an operator $I$ on a real Banach space $X$, we can define a complex structure on $X$ by letting, for every $x \in X$, $ix := I(x)$ and $\|x\|_{\C} := \sup_{a \in \C, \,|a|=1} \|ax\|$. Thus, in the rest of this paper, we will identify the set of complex structures on $X$ with the set of $I \in \cL(X)$ such that $I^2 = -\Id_X$. 

\smallskip

If $X$ is a real Banach space, then $X$ admits a complex structure if and only if its $2$-codimensional subspaces do so. Indeed, if $X$ admits a complex structure $I$, then given any hyperplane $H \subseteq X$, it is easy to see that the subspace $H \cap I(H)$ of $X$ has codimension $2$ and is $I$-invariant; conversely, if a subspace $Y \subseteq X$ of codimension $2$ admits a complex structure, then $X$ is isomorphic to $Y \oplus \C$, so it admits a complex structure too. Thus, the problem of the existence of complex structures on finite-codimensional subspaces of $X$ is settled once we know whether $X$ and its hyperplanes admit complex structures. The following result of existence follows from results by Ferenczi and Galego \cite{EvenSpaces}.

\begin{thm}[Ferenczi--Galego]\label{ExistComplexStructures}
Let $X$ be a real Banach space and let $u \in \cL(X)/\cS(X)$ be such that $u^2 = -1$. Then exactly one of the following conditions holds:
\begin{enumerate}
    \item There exists $U \in \cL(X)$ with $\pi_X(U) = u$ such that $U^2 = -\Id_X$;
    \item There exists $U \in \cL(X)$ admitting an invariant hyperplane $H$, with $\pi_X(U) = u$, and such that $(U \restriction_H)^2 = -\Id_H$.
\end{enumerate}
\end{thm}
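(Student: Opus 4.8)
The plan is to recast the statement in Fredholm-theoretic language and then carry out an explicit construction. Fix any $V\in\cL(X)$ with $\pi_{X}(V)=u$ and put $W:=V^{2}+\Id_{X}$; then $W\in\cS(X)$, since $\pi_{X}(W)=u^{2}+1=0$. As $u$ is invertible in $\cL(X)/\cS(X)$ (with inverse $-u$), $V$ is Fredholm; since strictly singular operators are inessential, $\ind(V^{2})=\ind(-\Id_{X}+W)=\ind(-\Id_{X})=0$, and $\ind(V^{2})=2\ind(V)$, whence $\ind(V)=0$. Perturbing $V$ by a finite-rank operator — which changes neither $\pi_{X}(V)$ nor the strict singularity of $W$ — I may therefore assume $V\in\GL(X)$, so that $-V^{2}=\Id_{X}-W$ is invertible with $\pi_{X}(-V^{2})=1$. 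In these terms the two alternatives say: either $V$ can be corrected, within the coset $V+\cS(X)$, to an operator squaring to $-\Id_{X}$, or only its restriction to some invariant hyperplane can be so corrected.

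The construction splits $X$ along the real spectrum of $V$. Since $\sigma_{\mathrm{ess}}(V)=\sigma_{\cL(X)/\cS(X)}(u)\subseteq\{i,-i\}$ contains no real point, standard Fredholm theory gives that $\sigma(V)$ has no real accumulation point, so $V$ has only finitely many real — hence nonzero — eigenvalues, each of finite algebraic multiplicity. Let $F$ be the finite-dimensional, $V$-invariant sum of their generalized eigenspaces and $F^{c}$ the associated Riesz complement, so that $X=F\oplus F^{c}$ with both summands $V$-invariant. On $F^{c}$ the operator $-(V\restriction_{F^{c}})^{2}$ is invertible and its spectrum avoids $(-\infty,0]$ — every spectral value of $V\restriction_{F^{c}}$ is non-real, and $-\lambda^{2}\notin(-\infty,0)$ for non-real $\lambda$ — so the principal branch of the square root, a conjugation-symmetric holomorphic function, produces a real $R\in\cL(F^{c})$ with $R^{2}=-(V\restriction_{F^{c}})^{2}$, commuting with $V\restriction_{F^{c}}$, and $\pi_{F^{c}}(R)=1$ (recall $\cL(F^{c})/\cS(F^{c})$ is canonically $\cL(X)/\cS(X)$). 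Then $U_{0}:=(V\restriction_{F^{c}})R^{-1}$ satisfies $U_{0}^{2}=-\Id_{F^{c}}$ and $\pi_{F^{c}}(U_{0})=u$. For the finite part: if $\dim F$ is even, pick a complex structure $U_{F}$ on $F$ and set $U:=U_{F}\oplus U_{0}$, so that $U^{2}=-\Id_{X}$ and $U-V\in\cS(X)$, giving $\pi_{X}(U)=u$ and alternative (1). If $\dim F$ is odd, pick a hyperplane $F'\subseteq F$, a complex structure $U_{F'}$ on $F'$, any extension $U_{F}''\in\cL(F)$ of $U_{F'}$ leaving $F'$ invariant, and set $U':=U_{F}''\oplus U_{0}$; then $H:=F'\oplus F^{c}$ is a $U'$-invariant hyperplane with $(U'\restriction_{H})^{2}=-\Id_{H}$ and $\pi_{X}(U')=u$, giving alternative (2). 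In particular at least one alternative always holds.

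The hard part is exclusivity. Attach to $u$ the parity $p(u):=\dim F\bmod 2$ of the finite part obtained above from any invertible lift (in the finite-dimensional model this is just $\dim X\bmod 2$). Granting that $p(u)$ does not depend on the lift, exclusivity follows: if (1) holds, then $U^{2}=-\Id_{X}$ forces $\sigma(U)\subseteq\{i,-i\}$, so $U$ has no real eigenvalue and $p(u)=0$; if (2) holds, then $U'\restriction_{H}$ is a genuine complex structure on the hyperplane $H$ whose class in $\cL(H)/\cS(H)\cong\cL(X)/\cS(X)$ is again $u$, so the corresponding parity for $H$ is $0$, and a block-diagonal lift of $u$ on $X=H\oplus\R x_{0}$ gains exactly one real eigenvalue, so $p(u)=1$. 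Thus everything reduces to the well-definedness of $p$. This parity is locally constant in the lift — the real spectrum of a real operator changes only through conjugate pairs of eigenvalues appearing or disappearing, or through collisions of real eigenvalues, all of which preserve parity mod $2$ — so one needs enough connectivity of the set of invertible lifts of $u$, equivalently of the group $\GL(X)\cap(\Id_{X}+\cS(X))$, for this locally constant function to be constant. Establishing this is precisely the homotopy analysis carried out by Ferenczi and Galego in \cite{EvenSpaces}, from which the theorem follows; it is where I expect the real difficulty to lie, everything preceding being soft Fredholm theory.
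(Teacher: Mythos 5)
The paper does not prove this statement at all --- it is quoted from \cite{EvenSpaces} with the remark that it ``follows from results by Ferenczi and Galego'' --- so the only meaningful comparison is with the parity machinery the paper later builds in Section \ref{secReflections}, which is essentially the invariant you construct. Your existence half is correct and complete: the reduction to an invertible lift $V$ via Fredholm index, the Riesz decomposition $X=F\oplus F^{c}$ along the finitely many real eigenvalues (legitimate because $u-\lambda$ is invertible in $\cL(X)/\cS(X)$ for every real $\lambda$, so no real point lies in the essential spectrum), the square-root correction $U_{0}=(V\restriction_{F^{c}})R^{-1}$ on the infinite part, and the case split on $\dim F\bmod 2$ all check out.

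The exclusivity half has a genuine gap, and moreover the reduction you propose would fail. You reduce well-definedness of the parity $p(u)$ to ``enough connectivity of the set of invertible lifts of $u$, equivalently of the group $\GL(X)\cap(\Id_{X}+\cS(X))$,'' and then defer this to \cite{EvenSpaces}. But for a real Banach space that group is in general \emph{not} connected: on $\ell_{2}(\R)$ a reflection lies in $\Id+\cS(X)$ and is separated from $\Id_{X}$ inside $\GL(X)\cap(\Id_{X}+\cS(X))$ by exactly the locally constant parity you are using (this is the Leray--Schauder degree obstruction). So a locally-constant-function argument on the set of \emph{invertible} lifts cannot close the proof. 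The repair is short and stays within your own toolkit: work instead with the set of \emph{all} lifts, i.e.\ the affine coset $V+\cS(X)$, which is convex, hence connected. The parity is still defined and locally constant there, because for any lift $T$ (invertible or not) and any $\lambda\in\R$ the element $u-\lambda$ is invertible in $\cL(X)/\cS(X)$ (an explicit inverse is $-(u+\lambda)/(1+\lambda^{2})$), so $T-\lambda$ is Fredholm for every real $\lambda$; hence every real spectral point of $T$ is an isolated eigenvalue of finite multiplicity, the non-real spectrum can only accumulate at $\pm i$, and the conjugation-invariant-contour argument (the one the paper uses in Lemma \ref{ContinuityEigenvalues}, via Proposition \ref{kato}) gives local constancy of the parity along the segment $t\mapsto V+tS$. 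With that substitution your proof is self-contained and correct; as written, the step you outsource is both the crux and misidentified.
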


It follows that as soon as $\cL(X)/\cS(X)$ contains a square root of $-1$, then there exist a complex structure, either on $X$, or on its hyperplanes. (Of course, nothing prevents both to happen at the same time, since a complex structure on $X$ and a complex structure on its hyperplanes could come from distinct elements of $\cL(X)/\cS(X)$; this is the case, for instance, for $X = \ell_2$.) In particular if $X$ is a real HI space having type $\C$ or $\H$, then either $X$, or its hyperplanes, admits a complex structure. Conversely, it is immediate that if either $X$, or its hyperplanes, admit a complex structure, then some element of $\cL(X)/\cS(X)$ should have square $-1$. Thus, real HI spaces of type $\R$, and in particular the real Gowers--Maurey's space, admit no complex structures, neither their hyperplanes do.

\smallskip

The following notions were introduced by Ferenczi and Galego in the same paper \cite{EvenSpaces}.

\begin{defin}\label{DefEvenOdd}

\alaligne

\begin{enumerate}
    \item The real Banach space $X$ is said to be \textit{even} if it admits a complex structure, and its hyperplanes do not.
    \item The real Banach space $X$ is said to be \textit{odd} if it does not admit a complex structure, and its hyperplanes do.
\end{enumerate}

\end{defin}

This terminology comes from the finite-dimensional case: spaces of even dimension are even, and spaces of odd dimension are odd. It follows from the previous remarks that every real HI space $X$ satisfies exactly one of the four following properties:

\begin{enumerate}
    \item $X$ has type $\R$;
    \item $X$ is even;
    \item $X$ is odd;
    \item both $X$ and its hyperplanes admit a complex structure.
\end{enumerate}

We have examples in the three first cases. As already said, the real Gowers--Maurey's space has type $\R$. In \cite{FerencziComplexStructures}, Ferenczi built two examples of even real HI spaces, $X(\C)$ and $X(\H)$, having respectively type $\C$ and $\H$. Since hyperplanes of even spaces are obviously odd, and similarly, hyperplanes of odd spaces are even, it follows that hyperplanes of $X(\C)$ and of $X(\H)$ are odd HI spaces of respective types $\C$ and $\H$. On the other hand, we will show in Section \ref{secDescription}, as a byproduct of our work on general linear groups, that case 4. cannot possibly occur (see Theorem \ref{REvenOdd}). This solves by the negative the following question by Ferenczi and Galego (still open in the general case) in the special case of HI spaces:

\begin{question}[Ferenczi--Galego]
Does there exist a real Banach space $X$ which is not isomorphic to its hyperplanes, such that both $X$ and its hyperplanes admit a complex structure?
\end{question}

We are now ready to state our main theorem, which will be proved in the next two sections.

\begin{thm}\label{main}

Let $X$ be a real HI space. Then exactly one of the three following condition holds.
\begin{enumerate}
    \item $X$ has type $\R$, and $\GL(X)$ has exactly four connected components: one containing $\Id_X$, one containing $-\Id_X$, one containing all reflections, and one containing all antireflections.
    \item $X$ is even, and $\GL(X)$ has exactly two connected components: one containing $\Id_X$ and $-\Id_X$, and one containing all reflections and all antireflections.
    \item $X$ is odd, and $\GL(X)$ has exactly two connected components: one containing $\Id_X$ and all antireflections, and one containing $-\Id_X$ and all reflections.
\end{enumerate}

\end{thm}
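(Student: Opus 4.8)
The plan is to combine Theorem~\ref{AtMostFour}, which already tells us that every $T \in \GL(X)$ is homotopic to one of the four ``model'' operators $\Id_X$, $-\Id_X$, a reflection, or an antireflection, with the forthcoming results of Sections~\ref{secReflections} and~\ref{secDescription}. Thus the content of the theorem is entirely about deciding \emph{which} of these four classes actually coincide, according to the type of $X$. The central ingredient will be Theorem~\ref{thmReflexions} (the identity is never homotopic to a reflection in $\GL(X)$), together with a ``determinant-like'' invariant, to be constructed in Section~\ref{secReflections}, that separates the components and whose behaviour depends on the algebra $\cL(X)/\cS(X)$ and on whether $X$ or its hyperplanes carry a complex structure.

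Concretely, I would proceed as follows. First, reduce to the trichotomy $\R$ / even / odd: since $\cL(X)/\cS(X)$ is $\R$, $\C$ or $\H$ (Theorem~\ref{FerencziRCH}), Theorem~\ref{ExistComplexStructures} gives that a square root of $-1$ in $\cL(X)/\cS(X)$ produces a complex structure on $X$ or on its hyperplanes; combined with Theorem~\ref{REvenOdd} (no complex structure simultaneously on $X$ and its hyperplanes), exactly one of ``type $\R$'', ``even'', ``odd'' holds, so the three cases of the theorem are exhaustive and mutually exclusive. Next, handle the \textbf{type $\R$} case: here the invariant takes values in a two-element-by-two-element set (a sign coming from $\pm 1 \in \cL(X)/\cS(X) \cong \R$, refined by a reflection-detecting sign), so $\Id_X$, $-\Id_X$, reflections and antireflections lie in four \emph{distinct} classes; Theorem~\ref{AtMostFour} shows there are no others, giving exactly four components, as claimed. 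Then the \textbf{even} case: $X$ admits a complex structure $I$, so the path $t \mapsto \cos(\pi t)\Id_X + \sin(\pi t) I$ connects $\Id_X$ to $-\Id_X$ inside $\GL(X)$; one then checks (via the invariant) that reflections and antireflections form the \emph{other} single class, and that these two classes are distinct by Theorem~\ref{thmReflexions} — giving exactly two components. Finally the \textbf{odd} case: hyperplanes of $X$ admit a complex structure; I would use this to build a path in $\GL(X)$ from a reflection to $-\Id_X$ — rotating by angle $\pi$ inside a complemented copy of a hyperplane while flipping the remaining line — so $-\Id_X$ and the reflections are homotopic, and dually $\Id_X$ and the antireflections are homotopic; again the invariant and Theorem~\ref{thmReflexions} show these two classes are distinct, giving two components partitioned as stated.

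For the lower bounds (that the listed operators really do fall in distinct components in each case) I would lean entirely on the invariant from Section~\ref{secReflections}: its construction presumably mimics a determinant modulo strictly singular perturbations, taking values in $(\cL(X)/\cS(X))^\times / \!\sim$ suitably quotiented, so that it is $1$ on $\Id_X$, $-1$ on $-\Id_X$, and detects a reflection through an extra $\Z/2$-factor measuring the action on the missing line/hyperplane; its homotopy-invariance then immediately separates the claimed classes. For the upper bounds (that no further components exist), Theorem~\ref{AtMostFour} does all the work once the coincidences above are established. The only genuine obstacle in \emph{this} proof is constructing the explicit homotopies in the even and odd cases and checking they stay inside $\GL(X)$ — the even case is a clean rotation, but the odd case requires first isolating a complemented hyperplane carrying the complex structure and verifying that the ``rotate the hyperplane, flip the line'' family consists of invertible operators throughout; all the deeper work (the invariant, Theorem~\ref{thmReflexions}, Theorem~\ref{REvenOdd}) is deferred to the sections that follow, so the present proof is essentially an orchestration of those results. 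Hence I would present it as a short proof: establish exhaustiveness of the three cases, then treat each case in a couple of lines citing Theorems~\ref{AtMostFour}, \ref{thmReflexions}, the invariant, and exhibiting the relevant path.
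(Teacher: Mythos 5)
Your proposal is correct and follows essentially the same route as the paper: the trichotomy comes from Theorem~\ref{REvenOdd}, the upper bounds on the number of components from Theorem~\ref{AtMostFour} together with the rotational homotopies supplied by complex structures (Lemma~\ref{FromComplexStructuresToHomotopy}), and the lower bounds from Theorem~\ref{thmReflexions} and the locally constant parity invariant of Section~\ref{secReflections} (which in the paper is the spectral count $p^I$/$\Par$ rather than a determinant modulo $\cS(X)$, but is used exactly as you describe). The paper packages the three cases as Theorems~\ref{DescriptionR}, \ref{DescriptionEven} and \ref{DescriptionOdd} and derives Theorem~\ref{main} from their combination, which is precisely your orchestration.
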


In each of the latter cases, a characterization of operators belonging to each connected component will be given in Section \ref{secDescription}, see Theorems \ref{DescriptionR}, \ref{DescriptionEven} and \ref{DescriptionOdd}.

\bigskip\bigskip

\section{Reflections on real HI spaces}\label{secReflections}

In this section, we prove the following theorem.

\begin{thm}\label{thmReflexions}
Let $X$ be a real HI space. Then $\Id_X$ cannot be homotopic to a reflection in $\GL(X)$.
\end{thm}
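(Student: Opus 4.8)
The plan is to construct a "determinant-like" invariant that is continuous on $\GL(X)$, constant on connected components, takes the same value on $\Id_X$, but a different value on any reflection. Since $X$ is a real HI space, by Theorem~\ref{FerencziRCH} the quotient algebra $\cL(X)/\cS(X)$ is one of $\R$, $\C$, or $\H$, and in each case it is a (possibly skew) field. The key structural fact is that every $T \in \GL(X)$ projects to an invertible element $\pi_X(T)$ of this division algebra, and moreover, since $T$ is invertible and the algebra is so small, $T$ differs from a scalar multiple of the identity by a strictly singular (hence, on an HI space, "inessential") operator. Concretely, if $X$ has type $\R$, then $T = \lambda\Id_X + S$ with $\lambda \in \R^\times$ and $S$ strictly singular; since $\Id_X + \lambda^{-1}S$ is invertible, the strictly singular operator $\lambda^{-1}S$ has no eigenvalue $-1$, and in fact $T$ is a Fredholm operator of index $0$. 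The first step, then, is to record that every element of $\GL(X)$ is Fredholm of index $0$ — this uses that strictly singular operators on an HI space have the form needed (Fredholm perturbation theory: $\lambda\Id_X + \cS(X) \subseteq \Fred_0(X)$ for $\lambda \neq 0$).

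The second step is to build the invariant itself. The natural candidate is a map $d \colon \GL(X) \to \{\pm 1\}$ defined roughly by "the sign of $\lambda$" where $\pi_X(T)$ lies over $\lambda \in \R$; more precisely, in the type-$\R$ case $\pi_X(T) \in \R^\times$ directly and we set $d(T) = \operatorname{sign}(\pi_X(T))$. One checks this is multiplicative, continuous (the projection $\pi_X$ is continuous and $\R^\times$ has two components), hence locally constant on $\GL(X)$, so constant on connected components. Then $d(\Id_X) = +1$, whereas a reflection $R$ has $R = \Id_X + S$ with $S$ of rank one (hence strictly singular), so $\pi_X(R) = 1 > 0$ and $d(R) = +1$ as well — so this crude invariant does \emph{not} separate them. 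This is exactly where the genuine difficulty lies: reflections are compact perturbations of the identity, so any invariant factoring through $\cL(X)/\cS(X)$ (or through the Calkin-type quotient) is useless. The real content of the theorem is that one must detect the \emph{finite-dimensional "sign"} — the analogue of $\det$ — on the part of the operator that is invisible to the quotient.

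To do this, I would exploit the spectral structure on the HI space directly. Given $T \in \GL(X)$ of type $\R$, write $T = \lambda(\Id_X + K)$ with $K = \lambda^{-1}S$ strictly singular. Strictly singular operators on HI spaces are "Riesz" operators: their spectrum is a sequence of eigenvalues of finite algebraic multiplicity accumulating only at $0$. For $\mu \neq 0$ not an eigenvalue of $K$, the operator $\Id_X + K$ restricted to the finite-dimensional generalized eigenspace machinery lets us define a well-defined \emph{finite-dimensional determinant sign}
\[
\delta(T) \;=\; \operatorname{sign}(\lambda)^{\operatorname{codim}} \cdot \prod_{\mu \text{ real eigenvalue of } K,\; \mu < -1} (-1)^{m(\mu)},
\]
where $m(\mu)$ is the algebraic multiplicity; equivalently, $\delta(T)$ is the sign of the "determinant" of $T$ computed on any sufficiently large finite-codimensional reducing configuration, made rigorous via the Fredholm determinant $\det(\Id_X + K)$ when $K$ is additionally of finite rank, and by a density/homotopy argument in general (strictly singular operators homotopic to finite-rank ones within the relevant class). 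The crucial points to verify are: (i) $\delta$ is well-defined independent of the decomposition $T = \lambda(\Id_X+K)$; (ii) $\delta$ is continuous on $\GL(X)$ — here one uses that eigenvalues of $K$ crossing the value $-1$ would force $\Id_X+K$ to become non-invertible, which cannot happen along a path in $\GL(X)$, so no sign change occurs; (iii) $\delta$ is multiplicative; and (iv) $\delta(\Id_X) = +1$ while $\delta(R) = -1$ for a reflection $R$, since $R$ has a single eigenvalue $-1$ of multiplicity one against the identity. The type-$\C$ and type-$\H$ cases are then handled by viewing $X$ over the larger field and reducing to the same computation, or by noting that a complex/quaternionic structure only makes $\GL(X)$ \emph{more} connected, so the type-$\R$ argument transfers. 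The main obstacle, and the heart of the matter, is step (ii): showing rigorously that this finite-dimensional determinant sign cannot jump along a continuous path in $\GL(X)$, i.e., controlling the generalized eigenspaces of a continuously varying family of strictly singular operators on an HI space — this is where the HI hypothesis (forcing the Riesz property and the scalar-plus-strictly-singular structure) does the essential work, and where I expect most of the technical effort to go.
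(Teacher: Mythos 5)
Your overall strategy---a locally constant, determinant-like $\{\pm 1\}$-invariant detecting the parity of the number of negative eigenvalues (with multiplicity), which distinguishes $\Id_X$ from a reflection---is exactly the paper's, and in the type-$\R$ case your $\delta$ coincides with the paper's invariant $p^{(-\infty,0)}$ on the connected component of the identity. But there are two genuine gaps. First, your step (ii), which you yourself flag as the heart of the matter, is left unproved, and the mechanism you sketch is incomplete: you only rule out eigenvalues of $K$ crossing $-1$ (i.e.\ eigenvalues of $T$ crossing $0$), but the count of \emph{real} eigenvalues in $(-\infty,0)$ can also change when real eigenvalues collide and move off the real axis as a conjugate pair, or when a conjugate pair lands on the negative real axis. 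This preserves parity, but that is precisely what must be argued; the paper does it by combining Kato's continuity of the total multiplicity inside a fixed contour with the conjugation-invariance of the spectrum of a real operator (so the non-real eigenvalues inside the contour contribute an even total), after first establishing via Fredholm theory and countability of the spectrum that the relevant spectral values are isolated eigenvalues of finite multiplicity.

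Second, and more seriously, your treatment of the type-$\C$ and type-$\H$ cases does not work. There the decomposition $T=\lambda(\Id_X+K)$ with $\lambda\in\R$ and $K$ strictly singular is unavailable, since $\pi_X(T)$ need not be real; and along a path from $\Id_X$ to a reflection, $\pi_X(T_t)$ can pass through negative reals, where the count of eigenvalues in $(-\infty,0)$ ceases to be finite or well defined. ``Viewing $X$ over the larger field'' is not possible in general ($X$ of type $\C$ need not admit a complex structure---it may be odd), and ``more connectedness makes the argument transfer'' points the wrong way: more connectedness makes non-homotopy \emph{harder} to establish. The paper needs genuinely new machinery here: it shows that the parity $p^{\R}$ of the total number of real eigenvalues is a \emph{constant} of the space (via a convexity argument in the division algebra $\cL(X)/\cS(X)$), and uses this constant to glue $p^{(-\infty,0)}$ and $p^{(0,+\infty)}$ into a single locally constant invariant $\Par$ on all of $\GL(X)$. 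Without something playing this role, your argument proves the theorem only for spaces of type $\R$.
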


In the proof of this theorem, we will need to use spectral theory, and hence to pass to the \textit{complexification} of the real space $X$. This complexification, denoted by $X^\C$, is defined as the set of formal sums $x+iy$ for $x, y \in X$, endowed with the complex vector space structure defined by $(a+ib)(x+iy) := (ax - by) + i(ay + bx)$ for $a, b \in \R$, and with the complex norm defined by $\|x + iy\| := \sup_{\theta \in \R} \|\cos \theta \cdot x + \sin \theta \cdot y \|$; this makes it a complex Banach space. Given $T \in \cL(X)$, we can define its \textit{complexification} $T^\C \in \cL(X^\C)$ by $T^\C(x+iy) := T(x)+iT(y)$. When talking about the spectrum of the operator $T$, we will actually abusively talk about the spectrum of the operator $T^\C$; the same convention will be adopted for all spectrum-related notions such as eigenvalues, multiplicities, etc. The spectrum of the operator $T$ will be denoted by $\sigma(T)$. It is easy to see that the spectrum of an operator $T$ on a real Banach space is always conjugation-invariant, and that if $\lambda$ is an eigenvalue of $T$, then so is $\overline{\lambda}$, with the same multiplicity. It is also straightforward that the complexification of a Fredholm operator is itself Fredholm, with the same index.

\begin{lem}\label{FiniteMultiplicity}

Let $X$ be a real HI space, and let $T \in \cL(X)$. Let $\lambda \in \sigma(T) \cap \R$ be such that $\pi_X(T) \neq \lambda$. Then $\lambda$ is an isolated eigenvalue of $T$ with finite multiplicity.

\end{lem}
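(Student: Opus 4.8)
The plan is to pass to the complexification $X^\C$ and work with the complexified operator $T^\C$, exploiting that finite-codimensional subspaces of a real HI space retain the HI property (so their operator algebras modulo strictly singular operators are still $\R$, $\C$, or $\H$). The key structural fact I want to invoke is that an operator on an HI space which is not of the form ``scalar plus strictly singular'' cannot be Fredholm of index $0$ in a way that... more precisely, the relevant dichotomy from Theorems \ref{FerencziRCH} and \ref{thmGowersMaurey}: on a complex HI space every operator is $\lambda\Id + S$ with $S$ strictly singular, and strictly singular operators on a Banach space are \emph{inessential}, so $\lambda\Id + S$ is Fredholm of index $0$ when $\lambda \neq 0$ and is strictly singular (hence not Fredholm, since the space is infinite-dimensional) when $\lambda = 0$.

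First I would show that $T - \lambda\Id_X$ is a Fredholm operator of index $0$. The hypothesis $\pi_X(T) \neq \lambda$ means $\pi_X(T - \lambda\Id_X) \neq 0$ in $\cL(X)/\cS(X)$, i.e. $T - \lambda\Id_X$ is not strictly singular. The complexification $(T-\lambda\Id_X)^\C$ is an operator on the complex HI space $X^\C$ (one should check $X^\C$ is HI, or argue directly via the real structure; in any case $\cL(X^\C)/\cS(X^\C)$ is still one of $\R,\C,\H$ — or one can avoid $X^\C$ being HI by noting $\pi_{X^\C}(A^\C) = 0$ iff $\pi_X(A) = 0$, so $A^\C$ is strictly singular iff $A$ is). Since $T - \lambda\Id_X$ is not strictly singular, neither is its complexification, so by Gowers--Maurey applied on $X^\C$ (writing $(T-\lambda\Id_X)^\C = \mu\Id_{X^\C} + S$) we must have $\mu \neq 0$, whence $(T-\lambda\Id_X)^\C$ is Fredholm of index $0$; therefore $T - \lambda\Id_X$ is Fredholm of index $0$ on $X$ as well.

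Next I would upgrade ``$\lambda \in \sigma(T)$, $T - \lambda\Id$ Fredholm of index $0$'' to ``$\lambda$ is an isolated point of $\sigma(T)$ and an eigenvalue of finite multiplicity.'' Since $\lambda \in \sigma(T)$ and $T - \lambda\Id_X$ is Fredholm, the kernel is finite-dimensional and nonzero (if it were zero, index $0$ would force surjectivity, contradicting $\lambda \in \sigma(T)$), so $\lambda$ is an eigenvalue of finite multiplicity. For isolation, I would use that the Fredholm-of-index-zero property is stable: for all $z$ in a neighbourhood of $\lambda$, $T - z\Id_X$ is Fredholm of index $0$. Either $\lambda$ is isolated in $\sigma(T)$, or there is a sequence $\lambda_n \to \lambda$ with $\lambda_n \in \sigma(T)$, $\lambda_n \neq \lambda$, forcing $\dim\ker(T - \lambda_n\Id_X) \geq 1$ for each $n$. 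One then invokes the standard perturbation/punctured-neighbourhood theorem for Fredholm operators (the analytic structure of the resolvent near a point where $T - z\Id$ is Fredholm of constant index): on a punctured disk around $\lambda$, either $T - z\Id$ is invertible, or the ``defect'' is constant; combined with index $0$ this yields that $\sigma(T)$ meets a small punctured disk around $\lambda$ in nothing, i.e. $\lambda$ is isolated.

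The main obstacle I expect is this last step — the clean passage from ``Fredholm of index $0$ in a full neighbourhood of $\lambda$'' to ``isolated point of the spectrum.'' The subtle point is that Fredholmness alone near $\lambda$ does not by itself force isolation for a general operator; one genuinely needs an extra ingredient, and the natural one here is again the HI rigidity: away from $\lambda$ (indeed for every $z$ with $\pi_X(T) \neq z$) the same argument shows $T - z\Id_X$ is Fredholm of index $0$, and $\pi_X(T) = z$ can hold for at most one real $z$ (since $\pi_X(T)$ is a fixed element and the copy of $\R$ inside $\cL(X)/\cS(X)$ is embedded). So $T - z\Id_X$ is Fredholm of index $0$ for \emph{all} real $z \neq$ that possible exceptional value, and in particular on a full punctured neighbourhood of $\lambda$. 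Then the classical result that the points $z$ where a Fredholm-index-$0$ operator-valued analytic function $z \mapsto T - z\Id$ fails to be invertible are isolated (they are the zeros of a ``generalized determinant,'' an analytic function not identically zero because $T - z\Id$ is invertible for $|z|$ large) applies and finishes the proof. I would make sure to cite the standard operator-theory reference for this punctured-neighbourhood/analytic-Fredholm theorem rather than reprove it.
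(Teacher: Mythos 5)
Your first step coincides with the paper's: since $\pi_X(T-\lambda\Id_X)\neq 0$, the operator $T-\lambda\Id_X$ is not strictly singular, hence (by the standard dichotomy for HI spaces, which the paper cites as ``every operator on an HI space is either strictly singular or Fredholm of index $0$'') it is Fredholm of index $0$, so $\lambda$ lies outside the essential spectrum. Up to there the two arguments agree, and your remark that a nonzero kernel of finite dimension follows from Fredholmness plus $\lambda\in\sigma(T)$ is fine (though note that ``multiplicity'' here should be the algebraic multiplicity, i.e.\ the rank of the Riesz projection, which is what the later parity arguments use; that finiteness comes out of the isolation step, not of the kernel computation).

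The gap is in your isolation step. You correctly identify that Fredholmness alone does not force isolation, but the repair you propose does not close the hole: you establish that $T-z\Id_X$ is Fredholm of index $0$ for all \emph{real} $z$ with at most one exception, whereas isolation of $\lambda$ in $\sigma(T)$ is a statement about a punctured neighbourhood in $\C$, and the analytic Fredholm alternative you want to invoke requires a \emph{connected open subset of $\C$} on which $z\mapsto T^\C-z\Id_{X^\C}$ is Fredholm and which also meets the resolvent set (so that the ``generalized determinant'' is not identically zero). Openness of the Fredholm domain gives you a complex disk around $\lambda$, but nothing in your argument rules out the bad branch of the alternative, namely that the entire connected component of the Fredholm domain containing $\lambda$ is contained in $\sigma(T)$; your appeal to invertibility for $|z|$ large is only useful if that component reaches the region $|z|>\|T\|$, which you have not shown (the essential spectrum of $T^\C$ could a priori contain non-real points, e.g.\ when $\pi_X(T)$ is a non-real element of $\C$ or $\H$, so the real-line information does not control the topology of the Fredholm domain in $\C$). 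The paper supplies exactly the missing ingredient: by Gowers--Maurey the spectrum of an operator on a real HI space is \emph{countable}, so no nonempty open subset of $\C$ can be contained in $\sigma(T)$, and Kato's Theorem 5.33 then yields that every non-essential spectral value is an isolated eigenvalue of finite (algebraic) multiplicity. Your argument could alternatively be repaired by showing that the essential spectrum of $T$ is finite (it consists of the roots of the minimal polynomial of $\pi_X(T)$ over $\R$, hence at most two points, so its complement is connected and unbounded), but some such additional input is genuinely needed and is absent from the proposal.
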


\begin{proof}

Recall that the \textit{essential spectrum} of $T$ is the set of all $\mu \in \C$ such that $T^\C - \mu\Id_{X^\C}$ is not semi-Fredholm. It is well known that every operator on an HI space is either strictly singular, or Fredholm with index $0$ (see for example \cite{RancourtHI}, Theorem 3.4). It follows that $\lambda$ does not belong to the essential spectrum of $T$.

\smallskip

It was proved by Gowers and Maurey \cite{GowersMaureyHI} that the spectrum of an operator on a real HI space is countable. It follows from \cite{Kato}, Theorem 5.33, that every non-essential spectral value of $T$ is an isolated eigenvalue with finite multiplicity. This is, in particular, the case for $\lambda$.

\end{proof}

For $I$ an open interval of $\R$, denote by $\NS^I(X)$ the set of all $T \in \cL(X)$ satisfying the two following conditions:
\begin{enumerate}
    \item $\pi_X(T) \notin I$;
    \item $\sigma(T) \cap \partial I = \varnothing$.
\end{enumerate}
(Here, $\partial I$ denotes the boundary of $I$ in $\R$.) The notation $\NS$ stands for \textit{non-singular}. By Lemma \ref{FiniteMultiplicity}, for $T \in \NS^I(X)$, $\sigma(T)\cap I$ is made of finitely many isolated eigenvalues with finite multiplicities. We will denote by $p^I(T)$ the parity of the sum of the multiplicities of eigenvalues of $T$ that are contained in $I$. Here, by a \textit{parity}, we will mean an element of $\Z/2\Z$.

\smallskip

The next lemma, along with Lemma \ref{lemParity}, is reminiscent of the proof of Proposition 8 in \cite{EvenSpaces}.

\begin{lem}\label{ContinuityEigenvalues}

Let $X$ be a real HI space and $I$ be an open interval of $\R$. Then the mapping $p^I \colon \NS^I(X) \to \Z/2\Z$ is locally constant.

\end{lem}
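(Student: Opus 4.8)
The plan is to show that every $T_0 \in \NS^I(X)$ has a neighbourhood in $\cL(X)$ on which $p^I$ is constant, using a standard Riesz projection / spectral separation argument. First I would fix $T_0 \in \NS^I(X)$. By Lemma \ref{FiniteMultiplicity}, $\sigma(T_0) \cap I$ consists of finitely many isolated eigenvalues $\lambda_1, \dots, \lambda_k$ with finite multiplicities $m_1, \dots, m_k$, so $p^I(T_0)$ is the parity of $m_1 + \cdots + m_k$. Since $\sigma(T_0) \cap \partial I = \varnothing$ and $\sigma(T_0)$ is closed (and the $\lambda_j$ are isolated in $\sigma(T_0)$), I can choose a bounded open set $\Omega \subseteq \C$, symmetric under conjugation, whose boundary $\Gamma$ is a finite union of smooth curves disjoint from $\sigma(T_0)$, such that $\Omega \cap \sigma(T_0) = \{\lambda_1, \dots, \lambda_k\}$ and moreover $\Omega \cap \R \subseteq I$ while $\partial I \cap \overline{\Omega} = \varnothing$ — concretely, take $\Omega$ to be a thin conjugation-symmetric neighbourhood of the real segment spanned by the $\lambda_j$'s, shrunk to avoid the rest of $\sigma(T_0)$ and to keep its real trace inside $I$.

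Next I would invoke the standard continuity of the holomorphic functional calculus: the Riesz projection
\[
P(T) := \frac{1}{2\pi i}\int_{\Gamma} (z\Id_{X^\C} - T^\C)^{-1}\, dz
\]
is well defined and norm-continuous in $T$ on the open set $\{T \in \cL(X) : \Gamma \cap \sigma(T) = \varnothing\}$, which contains $T_0$. For $T$ close enough to $T_0$ we have $\Gamma \cap \sigma(T) = \varnothing$, and since the rank of a projection is locally constant under norm perturbations (two projections at distance $< 1$ are similar, hence have the same rank), the rank of $P(T)$ equals the rank of $P(T_0)$, namely $m_1 + \cdots + m_k$, on a neighbourhood $\mathcal{V}$ of $T_0$. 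The rank of $P(T)$ counts, with multiplicity, the spectral values of $T$ inside $\Omega$.

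The remaining point is to check that, after possibly shrinking $\mathcal V$ so that $\mathcal V \subseteq \NS^I(X)$ (which is legitimate since $\NS^I(X)$ is open: condition (1) is open because $\pi_X$ is continuous and $I$ open, condition (2) is open by upper semicontinuity of the spectrum together with $\partial I$ being closed), the spectral values of $T \in \mathcal V$ lying in $\Omega$ are exactly those lying in $I$, and that they are real eigenvalues. Indeed, for $T \in \NS^I(X)$, Lemma \ref{FiniteMultiplicity} gives that $\sigma(T) \cap I$ consists of isolated eigenvalues of finite multiplicity; since $\Omega \cap \R \subseteq I$ and $\Omega$ contains no point of $\sigma(T_0)$ outside $\overline{I}$, after shrinking $\mathcal V$ and $\Omega$ one ensures $\sigma(T) \cap \Omega = \sigma(T) \cap I$ — here one uses upper semicontinuity of the spectrum to keep the eigenvalues from escaping $\Omega$, and conjugation-symmetry of $\Omega$ to handle that any non-real eigenvalue would come with its conjugate. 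Thus $p^I(T)$ is the parity of $\operatorname{rank} P(T) = \operatorname{rank} P(T_0)$, which is $p^I(T_0)$, for all $T \in \mathcal V$.

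The main obstacle is the bookkeeping in the third paragraph: one must arrange the contour $\Gamma$ and the region $\Omega$ so that the spectral subspace captured by $P(T)$ corresponds precisely to the real eigenvalues in $I$ and nothing else, for all nearby $T$ — in particular ruling out that a perturbation pushes an eigenvalue across $\partial I$ (excluded by condition (2) and openness) or that a pair of complex-conjugate eigenvalues drifts into $\Omega$ near the real axis (controlled by taking $\Omega$ thin and using that such eigenvalues of $T \in \NS^I(X)$ inside $I$ must be real by Lemma \ref{FiniteMultiplicity}). Everything else is the routine continuity of the Riesz projection and local constancy of rank.
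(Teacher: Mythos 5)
Your argument is essentially the paper's: the paper encapsulates your Riesz-projection / locally-constant-rank step in a citation of Kato (Proposition \ref{kato}) and then performs the same conjugation-symmetry bookkeeping inside a conjugation-invariant contour around $\sigma(T_0)\cap I$. One claim should be weakened, though: the equality $\sigma(T)\cap\Omega=\sigma(T)\cap I$ for all $T$ in a neighbourhood of $T_0$ cannot be arranged, no matter how thin $\Omega$ is, because a real eigenvalue of $T_0$ of multiplicity at least $2$ can split under an arbitrarily small perturbation into a conjugate pair of non-real eigenvalues lying arbitrarily close to the real axis, hence inside $\Omega$ but outside $I$. What is true --- and all you need, as your own remark about conjugation-symmetry already suggests --- is that $\sigma(T)\cap\Omega\cap\R=\sigma(T)\cap I$ while the non-real part of $\sigma(T)\cap\Omega$ has even total multiplicity, so that $\operatorname{rank}P(T)$ and $p^I(T)$ agree modulo $2$; this mod-$2$ formulation is exactly how the paper closes the argument.
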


To prove Lemma \ref{ContinuityEigenvalues}, we will need the following result of continuity of the spectrum. For a proof, see \cite{Kato}, Chapter Four, Subsection 3.5.

\begin{prop}\label{kato}
Suppose $T \colon Y \to Y$ is an operator on a complex space, and $\Gamma$ is a rectifiable, simple closed curve in $\C$ such that $\Gamma \cap \sigma(T) = \varnothing$. Denote by $V$ the bounded connected component of $\C \setminus \Gamma$, and suppose that $\sigma(T) \cap V$ consists in finitely many eigenvalues of $T$ with finite multiplicity. Then for every $S \in \cL(Y)$ close enough to $T$, $\sigma(S) \cap V$ consists in finitely many eigenvalues of $S$ and the sum of their multiplicities is equal to the sum of the multiplicities of the eigenvalues of $T$ in $V$.
\end{prop}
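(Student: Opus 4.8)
The plan is to prove this via the \emph{Riesz spectral projection} associated to the contour $\Gamma$. Since $\Gamma \cap \sigma(T) = \varnothing$ and $\Gamma$ is compact, the resolvent $z \mapsto (z\Id_Y - T)^{-1}$ is norm-continuous, hence bounded, on $\Gamma$; because $\Gamma$ is rectifiable it has finite length, so the contour integral
\[
P_T := \frac{1}{2\pi i} \oint_\Gamma (z\Id_Y - T)^{-1}\, dz
\]
is a well-defined element of $\cL(Y)$. The standard Riesz functional calculus shows that $P_T$ is a projection commuting with $T$, that $Y = \ima P_T \oplus \ker P_T$ with both summands $T$-invariant, and that $\sigma(T \restriction_{\ima P_T}) = \sigma(T) \cap V$ while $\sigma(T \restriction_{\ker P_T}) = \sigma(T) \setminus V$. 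By hypothesis $\sigma(T) \cap V$ consists of finitely many eigenvalues of finite multiplicity, so $\ima P_T$ is finite-dimensional and $\dim \ima P_T$ equals the sum of these multiplicities; this is the quantity that must be shown to be preserved.

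First I would check that $\Gamma$ avoids $\sigma(S)$ for every $S$ close to $T$. Writing $z\Id_Y - S = (z\Id_Y - T)\bigl(\Id_Y - (z\Id_Y - T)^{-1}(S - T)\bigr)$ and setting $M := \sup_{z \in \Gamma} \|(z\Id_Y - T)^{-1}\| < \infty$, one sees that whenever $\|S - T\| < M^{-1}$ the second factor is invertible by the Neumann series, uniformly in $z \in \Gamma$; hence $z\Id_Y - S$ is invertible for all $z \in \Gamma$, i.e. $\Gamma \cap \sigma(S) = \varnothing$. For such $S$ the operator
\[
P_S := \frac{1}{2\pi i} \oint_\Gamma (z\Id_Y - S)^{-1}\, dz
\]
is well-defined and enjoys the same properties as $P_T$: it is a projection commuting with $S$, its range is $S$-invariant, and $\sigma(S \restriction_{\ima P_S}) = \sigma(S) \cap V$.

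The core of the argument is to show $\|P_S - P_T\| \to 0$ as $S \to T$ and to deduce $\dim \ima P_S = \dim \ima P_T$. Using the resolvent identity
\[
(z\Id_Y - S)^{-1} - (z\Id_Y - T)^{-1} = (z\Id_Y - S)^{-1}(S - T)(z\Id_Y - T)^{-1},
\]
together with the uniform resolvent bounds over $\Gamma$ and the finite length of $\Gamma$, one gets $\|P_S - P_T\| = O(\|S - T\|)$. The main obstacle, and the only genuinely non-formal step, is to turn this norm estimate into equality of the (finite) dimensions of the ranges. I would do this by considering the two restricted maps $A := P_S \restriction_{\ima P_T} \colon \ima P_T \to \ima P_S$ and $B := P_T \restriction_{\ima P_S} \colon \ima P_S \to \ima P_T$. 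For $x \in \ima P_T$ one computes $(\Id_Y - BA)x = P_T(P_T - P_S)x$, and for $y \in \ima P_S$, $(\Id_Y - AB)y = P_S(P_S - P_T)y$; hence $BA$ differs from the identity of $\ima P_T$ by an operator of norm at most $\|P_T\|\,\|P_T - P_S\|$, and $AB$ differs from the identity of $\ima P_S$ by an operator of norm at most $\|P_S\|\,\|P_S - P_T\|$. Since $\|P_S\|$ stays bounded as $S \to T$, both quantities are $< 1$ once $S$ is close enough, so $BA$ and $AB$ are invertible; therefore $A$ is an isomorphism of $\ima P_T$ onto $\ima P_S$. In particular $\ima P_S$ is finite-dimensional with $\dim \ima P_S = \dim \ima P_T$.

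Finally, since $\ima P_S$ is finite-dimensional and $S$-invariant with $\sigma(S \restriction_{\ima P_S}) = \sigma(S) \cap V$, the set $\sigma(S) \cap V$ consists of finitely many eigenvalues of $S$, and the sum of their multiplicities equals $\dim \ima P_S = \dim \ima P_T$, i.e. the sum of the multiplicities of the eigenvalues of $T$ in $V$, as required.
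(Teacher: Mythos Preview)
Your argument is correct and is precisely the standard Riesz--projection proof: define the spectral projection by the contour integral, use the Neumann series to keep $\Gamma$ in the resolvent set for nearby $S$, use the resolvent identity to get $\|P_S-P_T\|\to 0$, and then conclude equality of ranks via the mutual near-inverses $A=P_S\restriction_{\ima P_T}$ and $B=P_T\restriction_{\ima P_S}$. The only point worth making explicit (you gesture at it in the last paragraph) is why the multiplicities of the eigenvalues of $S$ in $V$, computed in $Y$, coincide with those computed in the finite-dimensional space $\ima P_S$: this follows from the $S$-invariant splitting $Y=\ima P_S\oplus\ker P_S$ together with $\sigma(S\restriction_{\ker P_S})\cap V=\varnothing$, so the full generalized eigenspace for each $\lambda\in V$ already sits inside $\ima P_S$.

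As for the comparison: the paper does not prove this proposition at all---it simply quotes it from Kato's book (Chapter Four, \S3.5), where the argument is exactly the Riesz-projection one you have written out. So you have supplied what the paper outsources.
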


\begin{proof}[Proof of Lemma \ref{ContinuityEigenvalues}]

Fix $T \in \NS^I(X)$. Consider $\Gamma$ a conjugation-invariant, rectifiable, simple closed curve in $\C$, not intersecting $\sigma(T)$, such that, denoting by $V$ the bounded connected component of $\C \setminus \Gamma$, we have $V \cap \sigma(T) = I \cap \sigma(T)$, and $I \cap (-\|T^\C\| - 1, \|T^\C\| + 1) \subseteq V$. This last condition implies that for $S \in \NS^I(X)$ close enough to $T$, we have $\sigma(S) \cap I \subseteq V$.

\smallskip

Denote by $n$ the sum of the multiplicities of the eigenvalues of $T$ that are in $I$. By Proposition \ref{kato}, for $S \in \NS^I(X)$ close enough to $T$, the sum of the multiplicities of the eigenvalues of $S$ that are in $V$ is equal to $n$. By invariance of the spectrum of $S$ under conjugation, the sum of the multiplicities of the eigenvalues of $S$ that are in $V \setminus I$ is even. Thus, for such an $S$, the sum of the multiplicities of the eigenvalues of $S$ that are in $I$ has the same parity as $n$, as wanted.

\end{proof}

\begin{lem}\label{lemParity}
Let $X$ be a real HI space having type $\C$ or $\H$. Then $p^\R \colon \NS^\R(X) \to \Z/2\Z$ is constant.
\end{lem}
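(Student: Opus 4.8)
The plan is the following. Since $\partial\R = \varnothing$, the second defining condition of $\NS^\R(X)$ is vacuous, so
$\NS^\R(X) = \{T \in \cL(X) : \pi_X(T) \notin \R\} = \cL(X) \setminus W$, where $W := \pi_X^{-1}(\R) = \R\Id_X + \cS(X)$ is a closed linear subspace of $\cL(X)$. For $T \in \NS^\R(X)$ every real point of $\sigma(T)$ is, by Lemma~\ref{FiniteMultiplicity} (applicable since $\pi_X(T) \ne \lambda$ for every $\lambda \in \R$), an isolated eigenvalue of finite multiplicity, so by compactness of $\sigma(T)$ there are only finitely many such points and $p^\R(T)$ is well defined; and by Lemma~\ref{ContinuityEigenvalues}, $p^\R$ is locally constant on $\NS^\R(X)$. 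It therefore suffices to prove that $p^\R$ takes the same value on any two connected components of $\NS^\R(X)$, so I begin by describing those components.

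Since $\pi_X$ induces an isomorphism $\cL(X)/W \cong (\cL(X)/\cS(X))/\R$, Theorem~\ref{FerencziRCH} shows that $W$ has codimension $1$ in $\cL(X)$ when $X$ has type~$\C$, and codimension $3$ when $X$ has type~$\H$. In the type-$\H$ case $\cL(X)/W$ is isomorphic to $\R^3$, and since a closed finite-codimensional subspace is complemented we get a topological decomposition $\cL(X) \cong W \times \R^3$, under which $\NS^\R(X) \cong W \times (\R^3 \setminus \{0\})$ is connected; by the previous paragraph we are then done in this case.

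If $X$ has type~$\C$, then $W$ is a closed hyperplane, hence $W = \ker\ell$ for some nonzero continuous linear functional $\ell$ on $\cL(X)$, and $\NS^\R(X) = \{T : \ell(T) > 0\} \sqcup \{T : \ell(T) < 0\}$ is the partition of $\NS^\R(X)$ into its two connected components (each being a convex open set), which we denote $C^+$ and $C^-$. The key point is the homeomorphism $\iota \colon \NS^\R(X) \to \NS^\R(X)$, $T \mapsto -T$ (well defined since $\pi_X(-T) = -\pi_X(T) \notin \R$): on one hand $\ell(-T) = -\ell(T)$, so $\iota$ maps $C^+$ onto $C^-$ and $C^-$ onto $C^+$; on the other hand $\lambda \mapsto -\lambda$ is a bijection from $\sigma(T)$ onto $\sigma(-T)$ preserving multiplicities and carrying real eigenvalues to real eigenvalues, so $p^\R(-T) = p^\R(T)$ for every $T \in \NS^\R(X)$. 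Since $p^\R$ is constant on each of $C^+$ and $C^-$ by local constancy, and $\iota$ carries one onto the other while preserving the value of $p^\R$, we conclude that $p^\R$ is constant on all of $\NS^\R(X)$.

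The only slightly delicate issue is sorting out the connected components in the type-$\C$ case, where $\NS^\R(X)$ is genuinely disconnected; once this is settled, the symmetry $T \mapsto -T$ does all the remaining work, and nothing else of substance is involved.
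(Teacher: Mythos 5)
Your proof is correct. It shares with the paper's proof the two essential ingredients, namely the local constancy of $p^\R$ from Lemma~\ref{ContinuityEigenvalues} and the symmetry $p^\R(-T) = p^\R(T)$, but you establish the connectivity step by a genuinely different route. The paper avoids any case distinction on the type of $X$: given $S, T \in \NS^\R(X)$, it shows by a short convexity computation in $\cL(X)/\cS(X)$ that at least one of the segments $[S, T]$ and $[S, -T]$ stays inside $\NS^\R(X)$ (if both $[\pi_X(S), \pi_X(T)]$ and $[\pi_X(S), -\pi_X(T)]$ met $\R$, a suitable positive combination of the two intersection points would force $\pi_X(S) \in \R$, a contradiction), and then concludes exactly as you do. You instead identify the connected components of $\NS^\R(X) = \cL(X) \setminus W$ explicitly, splitting into cases according to whether $W = \R\Id_X + \cS(X)$ has codimension $1$ (type $\C$) or $3$ (type $\H$); this costs you the case analysis and an appeal to the complementation of closed finite-codimensional subspaces, but it buys a sharper picture: $\NS^\R(X)$ is already connected in the type-$\H$ case (so the symmetry $T \mapsto -T$ is not even needed there), and has exactly two components, swapped by $T \mapsto -T$, in the type-$\C$ case. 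Both arguments are complete; the paper's is shorter and uniform across the two types, yours is more structural and makes the topology of $\NS^\R(X)$ explicit.
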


\begin{proof}
Let $S, T \in \NS^\R(X)$; we show that $p^\R(S) = p^\R(T)$. We first claim that one of the segments $[S, T]$ and $[S, -T]$ is contained in $\NS^\R(X)$. If not, then both segments $[\pi_X(S), \pi_X(T)]$ and $[\pi_X(S), -\pi_X(T)]$ intersect $\R$, so there exist $\lambda, \mu \in (0, 1)$ such that both $u := \lambda \pi_X(S) + (1 - \lambda)\pi_X(T)$ and $v := \mu \pi_X(S) - (1 - \mu)\pi_X(T)$ are elements of $\R$. Hence, $\frac{(1-\mu)u + (1-\lambda)v}{(1-\mu)\lambda + (1 - \lambda)\mu} = \pi_X(S) \in \R$, contradicting the assumption that $S \in \NS^\R(X)$.

\smallskip

Since $p^\R(T) = p^\R(-T)$, we can assume, without loss of generality, that the segment $[S, T]$, is contained in $\NS^\R(X)$. If follow that $S$ and $T$ are in the same connected component of $NS^\R(X)$. Since, by Lemma \ref{ContinuityEigenvalues}, $p^\R$ is locally constant on $\NS^\R(X)$, we get that $p^\R(S) = p^\R(T)$.
%
\end{proof}

As a consequence, if $X$ is a real HI space having type $\C$ or $\H$, we can define $\Par(X) \in \Z/2\Z$ as the unique value taken by $p^\R$ on $\NS^\R(X)$ (observe that in the case of a space of type $\R$, this definition would make no sense since in this case, $\NS^\R(X)$ is empty). $\Par(X)$ will be called the \textit{parity} of $X$. As it will turn out (see Proposition \ref{PropParity} below), this notion of parity coincindes with the one defined by Ferenczi and Galego in \cite{EvenSpaces} (Definition \ref{DefEvenOdd}).

\smallskip

Observe that, for $X$ a real HI space of type $\C$ or $\H$, $\NS^{(-\infty, 0)}(X)$ and $\NS^{(0, +\infty)}(X)$ form an open cover of $\GL(X)$. Their intersection is precisely $\NS^\R(X) \cap \GL(X)$, and for every $T \in \NS^\R(X)\cap \GL(X)$, we have $p^{(-\infty , 0)}(T) + p^{(0, + \infty)}(T) = p^\R(T) = \Par(X)$. So the following conditions correctly define a mapping $\Par \colon \GL(X) \to \Z/2\Z$:
\begin{itemize}
\item $\Par(T) := p^{(-\infty , 0)}(T)$ for $T \in \NS^{(-\infty, 0)}(X)$;
\item $\Par(T) := \Par(X) - p^{(0 , + \infty)}(T)$ for $T \in \NS^{(0 , + \infty)}(X)$.
\end{itemize}
$\Par(T)$ will be called the \textit{parity} of the operator $T$.

\begin{lem}\label{ContinuityParity}
Let $X$ be a real HI space having type $\C$ or $\H$. Then $\Par \colon \GL(X) \to \Z/2\Z$ is locally constant.
\end{lem}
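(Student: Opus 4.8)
The plan is to deduce the statement from the fact that $\Par$ is built by gluing the two maps $p^{(-\infty,0)}$ and $p^{(0,+\infty)}$, each of which is locally constant by Lemma \ref{ContinuityEigenvalues}. Concretely, I would argue as follows. Recall that $\NS^{(-\infty,0)}(X)$ and $\NS^{(0,+\infty)}(X)$ are open in $\cL(X)$ (openness follows from the fact that the condition $\pi_X(T)\notin I$ is open, since $\pi_X$ is continuous, and the condition $\sigma(T)\cap\partial I=\varnothing$ is open by Proposition \ref{kato} applied to a small conjugation-invariant curve around the single boundary point of $I$, which is an isolated eigenvalue of finite multiplicity by Lemma \ref{FiniteMultiplicity}). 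Their intersection with $\GL(X)$ therefore gives an open cover of $\GL(X)$.

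Now fix $T\in\GL(X)$. If $T\in\NS^{(-\infty,0)}(X)$, then on the open neighbourhood $\NS^{(-\infty,0)}(X)\cap\GL(X)$ of $T$ the map $\Par$ agrees with $p^{(-\infty,0)}$, which is locally constant by Lemma \ref{ContinuityEigenvalues} (applied to the interval $I=(-\infty,0)$); hence $\Par$ is constant on a neighbourhood of $T$. Symmetrically, if $T\in\NS^{(0,+\infty)}(X)$, then on the open neighbourhood $\NS^{(0,+\infty)}(X)\cap\GL(X)$ of $T$ the map $\Par$ agrees with $\Par(X)-p^{(0,+\infty)}(T)$, which is locally constant since $p^{(0,+\infty)}$ is, again by Lemma \ref{ContinuityEigenvalues}. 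The only point requiring care is that the two definitions of $\Par$ are consistent on the overlap $\NS^\R(X)\cap\GL(X)$: this is exactly the identity $p^{(-\infty,0)}(T)+p^{(0,+\infty)}(T)=p^\R(T)=\Par(X)$ already recorded before the statement, which uses that $\NS^\R(X)\cap\GL(X)=\NS^{(-\infty,0)}(X)\cap\NS^{(0,+\infty)}(X)$ (an operator in $\GL(X)$ has $0\notin\sigma(T)$, so the only spectral values in $(-\infty,\infty)$ relevant to either parity lie in $(-\infty,0)\cup(0,+\infty)=\R\setminus\{0\}$, and the multiplicities add up). Since every point of $\GL(X)$ has a neighbourhood on which $\Par$ is constant, $\Par$ is locally constant.

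I do not expect a genuine obstacle here: the content has all been done in Lemma \ref{ContinuityEigenvalues} and Lemma \ref{lemParity}, and this lemma is essentially a bookkeeping statement checking that the gluing respects local constancy. The only mildly subtle point is to make sure the overlap map is well-defined and that both pieces are open, but the paper has already verified the well-definedness in the paragraph preceding the statement, so the proof reduces to invoking that verification together with Lemma \ref{ContinuityEigenvalues} on each piece of the open cover.
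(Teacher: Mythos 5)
Your proof is correct and follows essentially the same route as the paper's: cover $\GL(X)$ by the two open sets $\NS^{(-\infty,0)}(X)$ and $\NS^{(0,+\infty)}(X)$, observe that $\Par$ agrees on each with a function that is locally constant by Lemma \ref{ContinuityEigenvalues}, and rely on the consistency check already made before the statement. One small inaccuracy in your parenthetical on openness: ``$\pi_X(T)\notin I$'' is not by itself an open condition (the complement of an open interval of $\R$ is not open in $\C$ or $\H$); openness of the two pieces rather uses that condition 2 forces $T\in\GL(X)$, hence $\pi_X(T)$ invertible and in particular nonzero, so that the relevant condition becomes $\pi_X(T)\notin(-\infty,0]$ (resp.\ $\notin[0,+\infty)$), which is open.
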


\begin{proof}
It is enough to show that it is locally constant on the two elements of the open cover $\{\NS^{(-\infty, 0)}(X), \NS^{(0, +\infty)}(X)\}$ of $\GL(X)$. This directly follows from the definition of $\Par(T)$ on each of these open sets and from Lemma \ref{ContinuityEigenvalues}.
\end{proof}

\begin{proof}[Proof of Theorem \ref{thmReflexions}]

If the space $X$ has type $\C$ or $\H$, the result directly follows from Lemma \ref{ContinuityParity}, and for the fact that $\Par(\Id_X) = 0$ and $\Par(R) = 1$ for every reflection $R$ of $X$.

\smallskip

The case of spaces of type $\R$ has already been treated in \cite{RancourtHI}, but we reproduce the proof here for completeness of this paper. If $X$ has type $\R$, then $\pi_X \restriction_{\GL(X)}$ takes values in $\R \setminus \{0\}$. This implies that the image by $\pi_X$ of the connected component $C$ of the identity in $\GL(X)$ is entierely contained in $(0, +\infty)$; in other words, $C \subseteq \NS^{(-\infty, 0)}(X)$. Hence, by Lemma \ref{ContinuityEigenvalues}, $p^{(-\infty, 0)}$ is constant on $C$; and its value is necessarily $p^{(-\infty, 0)}(\Id_X) = 0$. Knowing that for a reflection $R$, we have $p^{(-\infty, 0)}(R) = 1$, this shows that no reflection belongs to $C$.

\end{proof}

\bigskip\bigskip

\section{A complete description of the connected components of $\GL(X)$}\label{secDescription}

In this section, using tools developped in Section \ref{secReflections}, we provide a complete description of the connected components of $\GL(X)$, for $X$ a real HI space. Our results will, in particular, imply Theorem \ref{main}. We start with some preliminary results.

\pagebreak

\begin{lem}\label{FromComplexStructuresToHomotopy}

Let $X$ be a real Banach space.
\begin{enumerate}
\item If $X$ admits a complex structure, then $\Id_X$ and $-\Id_X$ are homotopic in $\GL(X)$.
\item If hyperplanes of $X$ admits a complex structure, then $\Id_X$ is homotopic to antireflections in $\GL(X)$.
\end{enumerate}

\end{lem}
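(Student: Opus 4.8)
The plan is to use the complex structure to rotate the identity to its negative by multiplying by unimodular complex scalars. For part (1), suppose $X$ admits a complex structure, i.e.\ there is $I \in \cL(X)$ with $I^2 = -\Id_X$. Then for each $\theta \in [0,\pi]$ consider the operator $T_\theta := \cos\theta \cdot \Id_X + \sin\theta \cdot I$; this is just multiplication by $e^{i\theta}$ in the complex structure, hence invertible (its inverse is $\cos\theta \cdot \Id_X - \sin\theta \cdot I$, as one checks using $I^2 = -\Id_X$). The map $\theta \mapsto T_\theta$ is clearly norm-continuous, $T_0 = \Id_X$, and $T_\pi = -\Id_X$, so this is the desired path in $\GL(X)$.

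For part (2), suppose a hyperplane $H \subseteq X$ admits a complex structure. First I would reduce to a convenient model: pick a vector $x_0 \in X \setminus H$ so that $X = H \oplus \R x_0$, and let $R$ be the reflection fixing $H$ and sending $x_0$ to $-x_0$; since all antireflections are homotopic in $\GL(X)$, it suffices to show $\Id_X$ is homotopic to $-R$. Let $J \in \cL(H)$ with $J^2 = -\Id_H$. Now on $X = H \oplus \R x_0$ I would build a path as follows: on $H$ apply the rotation $\cos\theta \cdot \Id_H + \sin\theta \cdot J$ exactly as in part (1), and simultaneously on the one-dimensional part $\R x_0$ rotate in a two-dimensional auxiliary plane — the trick is that $H$ being infinite-dimensional and carrying a complex structure, we can ``absorb'' the extra real dimension. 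Concretely, fix a norm-one vector $h_0 \in H$ and let $P := \Span\{h_0, J h_0, x_0\}$, a three-dimensional $T_\theta$-relevant subspace; but cleaner is to split $X$ as $(H \ominus \spa\{h_0, Jh_0\}) \oplus \spa\{h_0, Jh_0, x_0\}$, act as the identity on the first summand, and on the second summand (which is $\cong \C \oplus \R$, hence $\cong \R^3$) we need a path in $\GL_3(\R)$ from $\Id$ to the diagonal matrix $(1,1,-1)$ composed with... — wait, that has determinant $-1$, so it is \emph{not} connected to the identity in $\GL_3(\R)$.

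The resolution, and the point where care is needed, is that one must use \emph{two} complex directions of $H$, not one. I would instead fix two norm-one vectors spanning with their $J$-images a four-dimensional subspace $Q := \spa\{h_1, Jh_1, h_2, Jh_2\} \subseteq H$, act as the identity on a complement of $Q$ inside $H$, and on the five-dimensional space $Q \oplus \R x_0$ construct the path by hand: $-R$ restricted here is $\mathrm{diag}(1,1,1,1,-1)$, which \emph{does} lie in the identity component of $\GL_5(\R)$ (determinant $-1$ but... no — again determinant $-1$). The genuine fix is to observe that $-R$ has the form $-\Id_X$ composed with $R$ reflected, i.e.\ $-R = $ (the operator that is $-\Id$ on $H$ and $+\Id$ on $\R x_0$); since $H$ admits a complex structure, $-\Id_H$ is homotopic to $\Id_H$ in $\GL(H)$ by part (1), and this homotopy extends by the identity on $\R x_0$ to a homotopy in $\GL(X)$ from $-R$ to $\Id_X$. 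This is the clean argument: $-R = (-\Id_H) \oplus \Id_{\R x_0}$, apply part (1) inside $\GL(H)$, extend by $\Id_{\R x_0}$, and use that all antireflections are mutually homotopic. The main obstacle, as the false starts above show, is bookkeeping the signs/determinants correctly and recognizing that the one-dimensional complement should simply be left untouched while the complex structure on $H$ does all the work.
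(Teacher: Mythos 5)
Your final argument is correct and is exactly the paper's proof: for part (1) the path $e^{i\theta}\Id_X$, and for part (2) the path that rotates $H$ by $e^{i\pi t}$ via its complex structure while fixing $x_0$, landing on the antireflection $-\Id_H \oplus \Id_{\R x_0}$. The intermediate false starts (trying to rotate $x_0$ through auxiliary finite-dimensional subspaces) are unnecessary, but you correctly discard them and arrive at the clean argument.
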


\begin{proof}

\begin{enumerate}

\item Fix a complex structure on $X$. Then $[0, 1] \to \GL(X)$,  $t \mapsto e^{i\pi t}\Id_X$ is a homotopy between $\Id_X$ and $-\Id_X$ in $\GL(X)$.

\item Fix $H$ a hyperplane of $X$ and $x_0 \notin H$. Fix a complex structure on $H$. For $t \in [0, 1]$, define $T_t \in \cL(X)$ by $T_t(x_0) := x_0$, and $T_t(h) := e^{i \pi t}h$ for $h \in H$. Then $[0, 1] \to \GL(X)$, $t \mapsto T_t$ is a homotopy between $T_0 = \Id_X$ and $T_1$ an antireflection.

\end{enumerate}

\end{proof}

\begin{thm}\label{REvenOdd}

Let $X$ be a real HI space. Then either $X$ is has type $\R$, or $X$ is even, or $X$ is odd, and these three cases are mutually exclusive.

\end{thm}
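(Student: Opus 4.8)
The plan is to use the trichotomy already set up in Section \ref{secComplexStructures}. It was observed there that every real HI space $X$ satisfies exactly one of the four properties: (1) $X$ has type $\R$; (2) $X$ is even; (3) $X$ is odd; (4) both $X$ and its hyperplanes admit a complex structure. These four are mutually exclusive, as they amount to the four combinations of whether $X$ and whether its hyperplanes admit a complex structure. Consequently, Theorem \ref{REvenOdd} is equivalent to the assertion that case (4) never occurs: once that is ruled out, the remaining three cases are automatically exhaustive and pairwise disjoint. So the whole proof reduces to deriving a contradiction from the assumption that $X$ is a real HI space such that both $X$ and all of its hyperplanes admit a complex structure.

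Assume this. Applying Lemma \ref{FromComplexStructuresToHomotopy}(1) to a complex structure on $X$, we get that $\Id_X$ is homotopic to $-\Id_X$ in $\GL(X)$. Applying Lemma \ref{FromComplexStructuresToHomotopy}(2) to a complex structure on the hyperplanes of $X$, we get that $\Id_X$ is homotopic in $\GL(X)$ to some antireflection, that is, to $-R$ for some reflection $R$ of $X$. Since homotopy in $\GL(X)$ is an equivalence relation, it follows that $-\Id_X$ and $-R$ are homotopic in $\GL(X)$. Now the map $\GL(X) \to \GL(X)$, $S \mapsto -S$, is a homeomorphism, hence carries homotopies to homotopies; applying it to a homotopy between $-\Id_X$ and $-R$ produces a homotopy in $\GL(X)$ between $\Id_X$ and $R$. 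Since $R$ is a reflection, this contradicts Theorem \ref{thmReflexions}, and the proof is complete.

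I expect no genuine obstacle here: all the work is already contained in Theorem \ref{thmReflexions} and Lemma \ref{FromComplexStructuresToHomotopy}, and the only point requiring a little care is the bookkeeping of the four cases together with the elementary observation that $S \mapsto -S$ preserves homotopy classes in $\GL(X)$. As an alternative for case (4), where $X$ necessarily has type $\C$ or $\H$ (its complex structure $I$ yields a square root $\pi_X(I)$ of $-1$ in $\cL(X)/\cS(X)$), one could instead invoke the parity invariant: $-\Id_X$ lies in $\NS^{(0,+\infty)}(X)$ with no eigenvalue in $(0,+\infty)$, so $\Par(-\Id_X) = \Par(X)$, whereas an antireflection $-R$ lies in $\NS^{(0,+\infty)}(X)$ with exactly the simple eigenvalue $1$ in $(0,+\infty)$, so its parity is $\Par(X) + 1$; since $\Par$ is locally constant on $\GL(X)$ by Lemma \ref{ContinuityParity} and both operators would lie in the connected component of $\Id_X$, this is again a contradiction. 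The first argument is shorter and uniform, so that is the one I would present.
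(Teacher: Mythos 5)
Your proof is correct and follows essentially the same route as the paper: reduce to excluding the case where both $X$ and its hyperplanes admit complex structures, then combine the two parts of Lemma \ref{FromComplexStructuresToHomotopy} to conclude that $\Id_X$ would be homotopic to a reflection, contradicting Theorem \ref{thmReflexions}. Your explicit justification via the homeomorphism $S \mapsto -S$ just fills in a step the paper leaves implicit.
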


\begin{proof}
We have already seen in Section \ref{secComplexStructures} that the three cases are mutually exclusive, and that the only other possible case is that both $X$ and its hyperplanes admit complex structures. We show that this last case cannot happen. Suppose it does. Then by Lemma \ref{FromComplexStructuresToHomotopy}, $\Id_X$ is both homotopic, in $\GL(X)$, to $-\Id_X$ and to antireflections. This implies that $\Id_X$ is homotopic to reflections, contradicting Theorem \ref{thmReflexions}.
\end{proof}

\begin{prop}\label{PropParity}

Let $X$ be a real HI space having type $\C$ or $\H$. Then $X$ is even iff $\Par(X) = 0$, and is odd iff $\Par(X) = 1$.

\end{prop}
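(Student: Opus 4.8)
The plan is to relate the parity $\Par(X)$, defined spectrally in Section \ref{secReflections}, to the existence of a complex structure, by producing from a complex structure an explicit operator on which $p^{\R}$ can be computed. Recall that $X$ has type $\C$ or $\H$, so by Theorem \ref{ExistComplexStructures} applied to a square root $u$ of $-1$ in $\cL(X)/\cS(X)$, either $X$ or its hyperplanes admit a complex structure; by Theorem \ref{REvenOdd} exactly one of these occurs, i.e.\ $X$ is even or odd. So it suffices to prove one implication, say: if $X$ is even then $\Par(X) = 0$, and if $X$ is odd then $\Par(X) = 1$; the converses then follow by exclusion.

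First I would handle the even case. Suppose $X$ admits a complex structure $I$, so $I^2 = -\Id_X$. Consider the operator $T_s := \cos(s)\Id_X + \sin(s) I$ for a small real $s \neq 0$ (or directly some fixed convenient operator like $I$ itself composed with a scalar, but it is cleanest to stay near the identity). For $s$ small and nonzero, $\pi_X(T_s) = \cos(s) + \sin(s)u \notin \R$ since $u \notin \R$ (as $u^2 = -1$), so $T_s \in \NS^{\R}(X)$ provided its spectrum misses $0$ — which holds for $s$ small since $T_0 = \Id_X$ is invertible and the spectrum varies continuously, and $0 \notin \R$ is the only boundary point to avoid. The spectrum of $T_s^{\C}$: on the complexification, $I^{\C}$ has $I^{\C\,2} = -\Id$, and one checks $X^{\C}$ splits (after complexifying) into the $\pm i$ eigenspaces of $I^{\C}$, so the eigenvalues of $T_s^{\C}$ are $\cos(s) \pm i\sin(s) = e^{\pm is}$, which for small $s\neq 0$ are non-real; hence $\sigma(T_s) \cap \R = \varnothing$ and $p^{\R}(T_s) = 0$. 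Therefore $\Par(X) = 0$. Thus every space admitting a complex structure has parity $0$; since an even space admits one, it has parity $0$.

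For the odd case, suppose hyperplanes of $X$ admit a complex structure: fix a hyperplane $H$, a vector $x_0 \notin H$, and a complex structure $J$ on $H$, so $J^2 = -\Id_H$. Define $T \in \cL(X)$ by $T(x_0) := x_0$ and $T\restriction_H := \cos(s)\Id_H + \sin(s) J$ for small $s \neq 0$. Again $T \in \NS^{\R}(X)$ for small $s$, and by the same eigenvalue computation $T\restriction_H^{\C}$ has only the non-real eigenvalues $e^{\pm is}$, while the remaining eigenvalue of $T^{\C}$ is $1$ (coming from $x_0$), with multiplicity $1$. Hence $\sigma(T) \cap \R = \{1\}$, and the sum of multiplicities of real eigenvalues in $\R$ is $1$, so $p^{\R}(T) = 1$, giving $\Par(X) = 1$. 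Since an odd space has this property, it has parity $1$. Finally, by Theorem \ref{REvenOdd} a type-$\C$ or type-$\H$ space is either even or odd (type $\R$ being excluded), so these two computations cover all cases, and since parities $0$ and $1$ are distinct the stated ``iff'' follows.

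The main obstacle I anticipate is justifying cleanly that the eigenvalues of $T_s^{\C}$ (resp.\ $(T\restriction_H)^{\C}$) are exactly $e^{\pm is}$ with the correct finite multiplicities and that there is no other real spectrum — in particular that $\sigma(T_s)$ contains nothing else on $\R$. This is where one should invoke that $I^{\C}$ (resp.\ $J^{\C}$) is a genuine complex structure operator so that, working purely algebraically on the complexification, $T_s^{\C} = \cos(s)\Id + \sin(s)I^{\C}$ satisfies $(T_s^{\C} - e^{is}\Id)(T_s^{\C} - e^{-is}\Id) = \cos^2(s)\Id + \sin^2(s)(I^{\C})^2 \cdot(-1)\cdots$ — more precisely $(T_s^{\C})^2 - 2\cos(s)T_s^{\C} + \Id = \sin^2(s)((I^{\C})^2 + \Id) = 0$, so the minimal polynomial of $T_s^{\C}$ divides $(z - e^{is})(z - e^{-is})$ and the spectrum is contained in $\{e^{is}, e^{-is}\}$, which are non-real for $s \in (0,\pi)$; this sidesteps any delicate multiplicity bookkeeping and immediately gives $\sigma(T_s) \cap \R = \varnothing$. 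The analogous relation on $H$ handles the odd case, with the one extra eigenvalue $1$ from $\Span(x_0)$ accounted for by an obvious invariant-subspace decomposition $X = \Span(x_0) \oplus H$.
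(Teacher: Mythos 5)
Your argument is correct, but it takes a genuinely different route from the paper's. The paper deduces the proposition from machinery it has already set up at the level of operators: by Lemma \ref{FromComplexStructuresToHomotopy}, a complex structure on $X$ makes $\Id_X$ homotopic in $\GL(X)$ to $-\Id_X$, local constancy of the operator parity $\Par\colon\GL(X)\to\Z/2\Z$ (Lemma \ref{ContinuityParity}) then forces $\Par(\Id_X)=\Par(-\Id_X)$, and the two sides are computed from the definitions to be $0$ and $\Par(X)$ respectively (with the analogous argument in the odd case). You instead bypass the operator-level parity and Lemma \ref{ContinuityParity} altogether and evaluate $p^{\R}$ directly on an explicit element of $\NS^{\R}(X)$, namely $T_s=\cos(s)\Id_X+\sin(s)I$ --- in fact a point on the very homotopy path $t\mapsto e^{i\pi t}\Id_X$ that the paper uses --- with the identity $(T_s^\C)^2-2\cos(s)T_s^\C+\Id=0$ locating the spectrum in $\{e^{\pm is}\}$; Lemma \ref{lemParity} then gives $\Par(X)=p^{\R}(T_s)=0$, and similarly $\Par(X)=1$ in the odd case via the extra simple eigenvalue $1$ carried by $\Span(x_0)$. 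Both proofs close the converse direction identically by quoting Theorem \ref{REvenOdd}, which is legitimate since that theorem is proved earlier and independently of this proposition. What your approach buys is self-containedness (only the definition of $\Par(X)$ and Lemma \ref{lemParity} are needed); what the paper's buys is economy, reusing lemmas it needs anyway for Theorems \ref{DescriptionEven} and \ref{DescriptionOdd}. One small slip worth fixing: membership in $\NS^{\R}(X)$ imposes no spectral condition at all, since $\partial\R=\varnothing$, so your remark that ``$0\notin\R$ is the only boundary point to avoid'' is off (and unnecessary) --- only $\pi_X(T_s)\notin\R$ needs checking, which your observation $\pi_X(I)^2=-1$ already gives.
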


\begin{proof}

If $X$ is even, then $\Id_X$ and $-\Id_X$ are homotopic in $\GL(X)$ by Lemma \ref{FromComplexStructuresToHomotopy}. By Lemma \ref{ContinuityParity}, it follows that $\Par(\Id_X) = \Par(-\Id_X)$. We know that $\Par(\Id_X) = p^{(-\infty, 0)}(\Id_X) = 0$, and that $\Par(-\Id_X) = \Par(X) - p^{(0, +\infty)}(-\Id_X) = \Par(X)$, so it follows that $\Par(X) = 0$.

\smallskip

A similar argument shows that if $X$ is odd, then $\Par(X) = 1$. Since by Theorem \ref{REvenOdd}, every real HI space of type $\C$ or $\H$ is either even, or odd, we deduce that the two implications we just proved are actually equivalences.

\end{proof}

We now describe the structure of the connected components of $\GL(X)$, for $X$ a real HI space, in the three cases defined by Theorem \ref{REvenOdd}, in the three theorems below. Their combination, along with Theorem \ref{REvenOdd}, implies Theorem \ref{main}.

\begin{thm}\label{DescriptionR}

Let $X$ be a real HI space having type $\R$. Then $\GL(X)$ has exactly four connected components, listed below:
\begin{itemize}
\item $\GL_0^+(X) := \{T \in \GL(X) \mid \pi_X(T) > 0 \text{ and } p^{(- \infty, 0)}(T) = 0\}$, containing $\Id_X$;
\item $\GL_1^+(X) := \{T \in \GL(X) \mid \pi_X(T) > 0 \text{ and } p^{(- \infty, 0)}(T) = 1\}$, containing all reflections;
\item $\GL_0^-(X) := \{T \in \GL(X) \mid \pi_X(T) < 0 \text{ and } p^{(0, + \infty)}(T) = 0\}$ containing $-\Id_X$;
\item $\GL_1^-(X) := \{T \in \GL(X) \mid \pi_X(T) < 0 \text{ and } p^{(0, + \infty)}(T) = 1\}$, containing all antireflections.
\end{itemize}

\end{thm}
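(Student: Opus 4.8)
The plan is to establish four things: that the four sets partition $\GL(X)$, that each is nonempty (containing the claimed operators), that each is open and connected, and that no two of them can be joined by a path in $\GL(X)$. The partition is essentially bookkeeping: since $X$ has type $\R$, $\pi_X$ maps $\GL(X)$ into $\R \setminus \{0\}$, so every $T \in \GL(X)$ satisfies exactly one of $\pi_X(T) > 0$ or $\pi_X(T) < 0$; and within each case the parity $p^{(-\infty,0)}(T)$ (resp. $p^{(0,+\infty)}(T)$) is a well-defined element of $\Z/2\Z$ by Lemma \ref{FiniteMultiplicity}, which splits each half further into exactly two pieces. Membership of the named operators is a direct computation: $\sigma(\Id_X) = \{1\}$ and $\sigma(-\Id_X) = \{-1\}$ give the parities $0$; a reflection $R$ has $\sigma(R) = \{1, -1\}$ with $-1$ a simple eigenvalue, so $p^{(-\infty,0)}(R) = 1$; an antireflection has $1$ as a simple eigenvalue and $-1$ otherwise, so $p^{(0,+\infty)}(-R) = 1$.

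For openness, I would argue that each set is the intersection of an open condition on $\pi_X(T)$ (the sign, which is stable since $\pi_X$ is continuous and values are bounded away from $0$ on a neighbourhood... more precisely, the sign is locally constant on $\{T : \pi_X(T) \neq 0\}$) with a level set of $p^{(-\infty,0)}$ or $p^{(0,+\infty)}$, which are locally constant on $\NS^{(-\infty,0)}(X)$ resp. $\NS^{(0,+\infty)}(X)$ by Lemma \ref{ContinuityEigenvalues}. One must check that, e.g., $\{\pi_X(T) > 0\} \cap \GL(X) \subseteq \NS^{(-\infty,0)}(X)$: indeed $\pi_X(T) > 0$ means $\pi_X(T) \notin (-\infty, 0)$, and $\sigma(T) \cap \partial(-\infty,0) = \sigma(T) \cap \{0\} = \varnothing$ since $T$ is invertible. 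So both conditions defining each set are locally constant on the relevant open set, hence each of the four sets is open. Since they partition $\GL(X)$, each is also closed, so each is a union of connected components.

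The core work is connectedness of each piece, and separation. For separation, the non-homotopy follows from the same local-constancy: if $T, T'$ lie in the same connected component of $\GL(X)$, then the continuous invariants just described agree on them. The sign of $\pi_X$ separates the $+$ pieces from the $-$ pieces; and within $\{\pi_X > 0\}$ (which is contained in $\NS^{(-\infty,0)}(X)$), $p^{(-\infty,0)}$ is locally constant, hence constant on each component, separating $\GL_0^+$ from $\GL_1^+$; symmetrically for the minus pieces using $p^{(0,+\infty)}$. So $\GL(X)$ has \emph{at least} four components. Combined with Theorem \ref{AtMostFour}, which says every $T \in \GL(X)$ is homotopic to one of $\Id_X$, $-\Id_X$, a reflection, or an antireflection — each lying in a different one of our four sets by the membership computation above — we conclude that each of the four sets is exactly one connected component: indeed any $T \in \GL_0^+(X)$ is homotopic to one of the four distinguished operators, and by the invariants it must be the one in $\GL_0^+(X)$, namely $\Id_X$, so $\GL_0^+(X)$ is connected; likewise for the other three.

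The main obstacle is purely organizational rather than deep: one must be careful that Theorem \ref{AtMostFour} is being invoked correctly to upgrade "at least four components, each a union of components" into "exactly four, each connected," and that the four distinguished operators genuinely fall into four distinct sets (which is where the explicit spectral computations for reflections and antireflections are needed). No new idea beyond Lemma \ref{ContinuityEigenvalues}, Lemma \ref{FiniteMultiplicity}, and Theorem \ref{AtMostFour} is required; the type-$\R$ hypothesis is what forces $\pi_X$ to land in $\R \setminus \{0\}$ and thereby makes the sign of $\pi_X(T)$ a genuine second invariant, doubling the count from two to four.
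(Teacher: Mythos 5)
Your proposal is correct and follows essentially the same route as the paper: show the four sets are nonempty (containing the distinguished operators), show each is a union of connected components via the continuity of $\pi_X$ and the local constancy of $p^{(-\infty,0)}$ and $p^{(0,+\infty)}$ from Lemma \ref{ContinuityEigenvalues}, and then invoke Theorem \ref{AtMostFour} to conclude there are exactly four components. Your extra verifications (that $\{\pi_X > 0\}\cap\GL(X) \subseteq \NS^{(-\infty,0)}(X)$, and the explicit spectra of reflections and antireflections) are details the paper leaves implicit, but the argument is the same.
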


\begin{proof}
The fact that $\GL_0^+(X)$, $\GL_0^-(X)$, $\GL_1^+(X)$, and $\GL_1^-(X)$ contain respectively $\Id_X$, all reflections, $-\Id_X$, and all antireflections, comes directly from the definitions. In particular, these four sets are nonempty. Now observe that every connected component of $\GL(X)$ is entierely contained in one of these four sets. Indeed, if $C$ is such a connected component, then the continuity of $\pi_X$ shows that $C$ is either entirely contained in $\GL_0^+(X) \cup \GL_1^+(X)$, or in $\GL_0^-(X) \cup \GL_1^-(X)$; and if, for instance, we are in the first case, then the local constancy of $p^{(- \infty, 0)}$ that $C$ is either contained in $\GL_0^+(X)$, or in $\GL_1^+(X)$. Since, by Theorem \ref{AtMostFour}, $\GL(X)$ has at most four connected components, it follows that the sets $\GL_0^+(X)$, $\GL_0^-(X)$, $\GL_1^+(X)$, and $\GL_1^-(X)$ are exactly the connected components of $\GL(X)$.
\end{proof}

\begin{thm}\label{DescriptionEven}

Let $X$ be an even real HI space. Then $\GL(X)$ has exactly two connected components, listed below:
\begin{itemize}
    \item $\GL_0(X) := \{T \in \GL(X) \mid \Par(T) = 0\}$, containing $\Id_X$ and $-\Id_X$;
    \item $\GL_1(X) := \{T \in \GL(X) \mid \Par(T) = 1\}$, containing all reflections and all antireflections.
\end{itemize}

\end{thm}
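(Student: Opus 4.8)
The plan is to deduce the result from the parity map $\Par\colon \GL(X)\to\Z/2\Z$ constructed in Section \ref{secReflections}, together with the a priori bound of Theorem \ref{AtMostFour}. First I would note that an even space admits a complex structure, so $\cL(X)/\cS(X)$ contains a square root of $-1$ and hence $X$ has type $\C$ or $\H$ by Theorem \ref{FerencziRCH}; in particular $\Par$ is well defined on $\GL(X)$ and, by Lemma \ref{ContinuityParity}, locally constant. Consequently $\GL_0(X)=\Par^{-1}(0)$ and $\GL_1(X)=\Par^{-1}(1)$ are disjoint open subsets whose union is $\GL(X)$, so every connected component of $\GL(X)$ is contained in one of them.

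Next I would locate the four ``model'' operators appearing in Theorem \ref{AtMostFour}. One has $\Par(\Id_X)=0$, and $\Par(-\Id_X)=\Par(X)=0$ since $X$ is even (Proposition \ref{PropParity}); equivalently, $\Id_X$ and $-\Id_X$ are homotopic in $\GL(X)$ by Lemma \ref{FromComplexStructuresToHomotopy}(1) and local constancy of $\Par$ gives the same conclusion. Thus $\Id_X,-\Id_X\in\GL_0(X)$. For a reflection $R$ a direct computation (as recorded in the proof of Theorem \ref{thmReflexions}) gives $\Par(R)=1$, so $R\in\GL_1(X)$. To place antireflections, fix a homotopy $t\mapsto U_t$ in $\GL(X)$ from $\Id_X$ to $-\Id_X$ provided by Lemma \ref{FromComplexStructuresToHomotopy}(1); then $t\mapsto U_tR$ is a homotopy in $\GL(X)$ from $R$ to the antireflection $-R$, whence $\Par(-R)=1$ and $-R\in\GL_1(X)$. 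Since any two reflections of $X$ are homotopic in $\GL(X)$, and likewise any two antireflections, it follows that all reflections and all antireflections lie in $\GL_1(X)$.

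Finally I would run the counting argument. The previous paragraph shows $\GL_0(X)$ and $\GL_1(X)$ are both nonempty, so $\GL(X)$ has at least two connected components. Conversely, by Theorem \ref{AtMostFour} every $T\in\GL(X)$ is homotopic in $\GL(X)$ to one of $\Id_X$, $-\Id_X$, a reflection, or an antireflection; but the homotopies exhibited above ($\Id_X$ to $-\Id_X$, and $R$ to $-R$) collapse these four possibilities into at most two homotopy classes, so $\GL(X)$ has at most two connected components. Hence it has exactly two, and since $\GL_0(X)$ and $\GL_1(X)$ are disjoint nonempty open sets covering $\GL(X)$, each contains exactly one of these two components and therefore equals it; the membership claims ($\Id_X,-\Id_X\in\GL_0(X)$ and all reflections and antireflections in $\GL_1(X)$) are exactly what was shown above.

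As for difficulty: essentially all the substantive work is imported, namely the construction and local constancy of $\Par$ from Section \ref{secReflections} and the bound of four components from Theorem \ref{AtMostFour}. The only genuinely new observation, and the one responsible for upgrading ``at most four'' to ``exactly two'', is the elementary fact that a homotopy from $\Id_X$ to $-\Id_X$ can be transported by left multiplication to a homotopy from any reflection to an antireflection; this needs nothing beyond Lemma \ref{FromComplexStructuresToHomotopy}, so I anticipate no serious obstacle.
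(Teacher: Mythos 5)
Your proposal is correct and follows essentially the same route as the paper: membership of the model operators in $\GL_0(X)$ and $\GL_1(X)$ via Proposition \ref{PropParity}, local constancy of $\Par$ to separate components, and the combination of Lemma \ref{FromComplexStructuresToHomotopy} with Theorem \ref{AtMostFour} to cap the number of components at two. The only place you add detail is the explicit homotopy $t\mapsto U_tR$ collapsing reflections onto antireflections (the paper instead computes $\Par(-R)=1$ directly from the definition), but this is a cosmetic difference, not a different argument.
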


\begin{proof}
The facts that $\GL_0(X)$ contains $\Id_X$ and $-\Id_X$, and that $\GL_1(X)$ contains all reflections and all antireflections, are direct consequences of the definition of these sets and of Proposition \ref{PropParity}. The local constancy of $\Par$ on $\GL(X)$ (see Lemma \ref{ContinuityParity}) shows that every connected component of $\GL(X)$ is entirely contained in one of these two sets. The combination of Lemma \ref{FromComplexStructuresToHomotopy} and Theorem \ref{AtMostFour} shows that $\GL(X)$ has at most two connected components. Those are necessarily $\GL_0(X)$ and $\GL_1(X)$.
\end{proof}

\begin{thm}\label{DescriptionOdd}

Let $X$ be an odd real HI space. Then $\GL(X)$ has exactly two connected components, listed below:
\begin{itemize}
\item $\GL_0(X) := \{T \in \GL(X) \mid \Par(T) = 0\}$, containing $\Id_X$ and all antireflections;
\item $\GL_1(X) := \{T \in \GL(X) \mid \Par(T) = 1\}$, containing $-\Id_X$ and all reflections.
\end{itemize}

\end{thm}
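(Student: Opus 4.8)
\textbf{Proof proposal for Theorem \ref{DescriptionOdd}.}

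The plan is to mirror closely the proof of Theorem \ref{DescriptionEven}, adjusting only the bookkeeping of which distinguished operators land in which set. First I would note that for an odd HI space $X$, Proposition \ref{PropParity} gives $\Par(X) = 1$. Now I compute the parities of the relevant operators. We have $\Par(\Id_X) = p^{(-\infty,0)}(\Id_X) = 0$, so $\Id_X \in \GL_0(X)$. For an antireflection $-R$, where $R$ is a reflection, $\pi_X(-R) = -1 < 0$, so we use the second defining clause: $\Par(-R) = \Par(X) - p^{(0,+\infty)}(-R)$. Since $-R$ has eigenvalue $1$ with multiplicity $1$ (on the hyperplane fixed by $R$) and eigenvalue $-1$ elsewhere, $p^{(0,+\infty)}(-R) = 1$, hence $\Par(-R) = 1 - 1 = 0$, so every antireflection lies in $\GL_0(X)$. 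Similarly $\Par(-\Id_X) = \Par(X) - p^{(0,+\infty)}(-\Id_X) = 1 - 0 = 1$, so $-\Id_X \in \GL_1(X)$; and for a reflection $R$ we have $\pi_X(R) = 1 > 0$ and $p^{(-\infty,0)}(R) = 1$ (eigenvalue $-1$ with multiplicity $1$), so $\Par(R) = 1$ and every reflection lies in $\GL_1(X)$. In particular both $\GL_0(X)$ and $\GL_1(X)$ are nonempty.

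Next I would invoke the local constancy of $\Par$ on $\GL(X)$ (Lemma \ref{ContinuityParity}), which immediately shows that each connected component of $\GL(X)$ is entirely contained in one of the two sets $\GL_0(X)$, $\GL_1(X)$; hence $\GL(X)$ has at least two connected components. For the matching upper bound, I would combine Theorem \ref{AtMostFour} with Lemma \ref{FromComplexStructuresToHomotopy}: since $X$ is odd, its hyperplanes admit a complex structure, so by part (2) of Lemma \ref{FromComplexStructuresToHomotopy}, $\Id_X$ is homotopic in $\GL(X)$ to antireflections. Theorem \ref{AtMostFour} says every $T \in \GL(X)$ is homotopic to one of $\Id_X$, $-\Id_X$, a reflection, or an antireflection; collapsing $\Id_X$ with antireflections (and hence, multiplying the homotopy by any fixed reflection, collapsing $-\Id_X$ with reflections), we are left with at most two homotopy classes. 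Therefore $\GL(X)$ has exactly two connected components, and by the containment observed above they must be precisely $\GL_0(X)$ and $\GL_1(X)$, with the membership of the distinguished operators as computed.

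There is essentially no serious obstacle here: every ingredient — the parity function and its local constancy, the at-most-four bound, and the homotopy consequences of complex structures on hyperplanes — has been established earlier in the paper, and the argument is a routine recombination of them. The one point requiring a little care is the parity computation for $-\Id_X$ and for antireflections, where one must correctly use the second clause in the definition of $\Par$ (the one involving $\Par(X)$) rather than the first, since these operators have negative image under $\pi_X$; it is exactly the value $\Par(X) = 1$ for odd spaces that swaps the roles of $\pm\Id_X$ relative to the even case and produces the asymmetric pairing in the statement.
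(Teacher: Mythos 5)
Your proof is correct and follows essentially the same route as the paper, which for this theorem simply says the argument is analogous to that of Theorem \ref{DescriptionEven}; you carry out the same three steps (parity computations via Proposition \ref{PropParity} and the definition of $\Par$, local constancy of $\Par$ from Lemma \ref{ContinuityParity}, and the at-most-two bound from Lemma \ref{FromComplexStructuresToHomotopy} combined with Theorem \ref{AtMostFour}). One small slip: for an antireflection $-R$, the eigenvalue $1$ lives on the line spanned by the vector negated by $R$, not on the hyperplane fixed by $R$ (where $-R$ acts as $-\Id$); your multiplicity count $p^{(0,+\infty)}(-R)=1$ is nevertheless correct.
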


\begin{proof}
The proof is similar to this of Theorem \ref{DescriptionEven}.
\end{proof}

The two last results show that the parity of an operator plays a similar role as the sign of its determinant in finite-dimension. More precisely, if $X$ is a finite-dimensional normed space, then the mappings $p^I$, for $I$ an interval of $\R$, can be defined on the whole $\cL(X)$, and $\Par(X)$ and $\Par(T)$ for $T \in \GL(X)$ can thus be correctly defined. We have that for all $T \in \GL(X)$, $e^{i\pi \Par(T)}$ is exactly the sign of the determinant of $T$. Moreover, Proposition \ref{PropParity} and Theorems \ref{DescriptionEven} and \ref{DescriptionOdd} are still valid in this case, replacing \textit{even} and \textit{odd} by \textit{even-dimensional} and \textit{odd-dimensional}, respectively.

\smallskip

Some results of this paper have been independently proved by Maurey \cite{MaureyQuaternions}, in particular Theorems \ref{thmReflexions} and \ref{REvenOdd}. In this text, he also proves some results about the existence of quaternionic structures on a real HI space and on its finite-codimensional subspaces, using quite different methods.

\bigskip

\paragraph{Acknowledgments.} The author would like to thank Bernard Maurey for very interesting questions and discussions on this topic; research that led to this paper was initiated by one of his questions. The author would also like to thank Valentin Ferenczi for advice on this topic.

\bigskip\bigskip

\bibliographystyle{plain}
\bibliography{main}

  \par
  \bigskip
    \textsc{\footnotesize Universit\"at Wien, Fakult\"at f\"ur Mathematik, Kurt G\"odel Research Center, Oskar-Morgenstern-Platz 1, 1090 Wien, AUSTRIA}
    
    \smallskip
    
    {\small \textit{Email address:} \url{noe.de.rancourt@univie.ac.at}}

\end{document}